\newcommand{\defeq}{\mathrel{:=}}
\renewcommand\ge\geqslant
\renewcommand\le\leqslant
\def\Hl{\mathop{{Hl}}\nolimits}
\newenvironment{sizedisplaymath}[1]{%
  \skip@=\baselineskip
  #1%
  \baselineskip=\skip@
  \displaymath
}{\enddisplaymath \ignorespacesafterend}
\newenvironment{sizemultline}[1]{%
  \skip@=\baselineskip
  #1%
  \baselineskip=\skip@
  \multline
}{\endmultline \ignorespacesafterend}  
\theoremstyle{plain}
\newtheorem{mytheorem}{Theorem}[section]
\newtheorem{myproposition}[mytheorem]{Proposition}
\newtheorem{mycorollary}{Corollary}[mytheorem]
\newtheorem{mycorollarycorollary}{Corollary}[mycorollary]
\theoremstyle{definition}
\newtheorem{mydefinition}[mytheorem]{Definition}
\newtheorem*{myexample*}{Example}
\theoremstyle{remark}
\newtheorem{myremarkaftertheorem}{Remark}[mytheorem]
\newtheorem*{myremark*}{Remark}
\begin{document}

\title{P-symbols, Heun identities, and ${}_3F_2$ identities}
\author[R. S. Maier]{Robert S. Maier}
\address{Depts.\ of Mathematics and Physics\\ University of Arizona\\ Tucson,
AZ 85721\\ USA}
\email{rsm@math.arizona.edu}
\urladdr{http://math.arizona.edu/\~{}rsm}
\date{}

\begin{abstract}
The usefulness of Riemann P-symbols in deriving identities involving the
parametrized special function~$\Hl$ is explored.  $\Hl$~is the analytic
local solution of the Heun equation, the canonical second-order
differential equation on the Riemann sphere with four regular singular
points.  The identities discussed include ones coming from M\"obius
automorphisms and F-homotopies, and also quadratic and biquadratic
transformations.  The case when $\Hl$~is identical to a generalized
hypergeometric function of ${}_3F_2$~type is examined, and Pfaff and Euler
transformations of ${}_3F_2(a_1,a_2,e+1;b_1,e;x)$ are derived.  They extend
several ${}_3F_2$ identities of Bailey and Slater.
\end{abstract}
\subjclass[2000]{Primary 33E30; 33C05, 33C20, 34M35}

\copyrightinfo{2007}
  {American Mathematical Society}

\maketitle

\section{Introduction}
\label{sec:intro}

The Gauss hypergeometric equation (GHE) and the Heun equation
(HE)~\cite{Ronveaux95} are canonical second-order Fuchsian differential
equations on the Riemann sphere~$\mathbb{P}^1_x$ with three and four
singular points, respectively.  (A~differential equation is Fuchsian if
each of its singular points is regular.)  By convention, they are written
as
\begin{subequations}
\begin{align}
&\left\{D^2 + \left[\frac{\gamma}{x} + \frac{\delta}{x-1}\right] D +
\left[\frac{\alpha\beta}{x(x-1)}\right]\right\}F = 0,\label{eq:GHE}\\ 
&\left\{D^2 +
\left[ \frac\gamma x + \frac\delta{x-1} + \frac\epsilon{x-a} \right]D +
\left[\frac{\alpha\beta\, x - q}{x(x-1)(x-a)}\right]\right\}F = 0,\label{eq:HE}
\end{align}
\end{subequations}
respectively.  Here $\alpha,\beta;\gamma,\delta,$ resp.\
$a(\neq0,1),q;\alpha,\beta;\gamma,\delta,\epsilon$ are complex-valued
parameters; and each equation is invariant
under~$\alpha\leftrightarrow\beta$.  The singular points are
$x=0,1,\infty,$ resp.\ $x=0,1,a,\infty,$ and their characteristic exponents
are
$0,\nobreak1-\gamma;\allowbreak0,\nobreak1-\delta;\allowbreak\alpha,\nobreak\beta,$
resp.\
$0,\nobreak1-\gamma;\allowbreak0,\nobreak1-\delta;\allowbreak0,\nobreak1-\epsilon;\allowbreak\alpha,\nobreak\beta$.
These parameters are constrained by Fuchs's relation on exponents.  The sum
of the exponents of the GHE, resp.\ the~HE, must be $1,$ resp.~$2$.  So
$\delta=\alpha+\beta-\gamma+1,$ resp.\
$\epsilon=\alpha+\beta-\gamma-\delta+1$.  If this Fuchsian condition does
not hold, the singular point $x=\infty$ will be irregular.  The
parameter~$q\in\mathbb{C}$ of the HE is `accessory': it does not affect the
exponents.

If $\gamma$~is a nonpositive integer, the local (Frobenius) solution of the
GHE or HE corresponding to the zero exponent at~$x=0$ will generically be
logarithmic; we do not consider this case.  If $\gamma$~is {\em not\/} a
nonpositive integer, the solution will be analytic at~$x=0$.  When
normalized to unity at~$x=0,$ it is denoted
${}_2F_1(x)\defeq {}_2F_1(\alpha,\beta;\gamma;x),$ resp.\
$\Hl(x)\defeq \Hl(a,q;\alpha,\beta;\gamma,\delta;x),$ and is called the Gauss
hypergeometric function, resp.\ the local Heun function.  (The shorter term
`Heun function' has a more specialized meaning~\cite{Ronveaux95}.)  The
redundant parameter $\delta,$ resp.~$\epsilon,$ is suppressed.

The parametrized special functions ${}_2F_1,\Hl$ have series expansions
of the form $\sum_{n=0}^\infty c(n)x^n$.  The coefficient sequences
$c\colon\mathbb{N}\to\mathbb{C}$ satisfy respective recurrences
\begin{subequations}
\begin{align}
  &\Bigl\{(n+\gamma)(n+1)E - (n+\alpha)(n+\beta)\Bigr\}\,c=0,\label{eq:GHErec}\\
  &\Bigl\{(n+\gamma+1)(n+2)a\,E^2 -\left[(n+1)(n+\gamma+\delta)a +
  (n+1)(n+\gamma+\epsilon) + q\right]E\label{eq:HErec}\\
  &\hskip3.05in{}+ (n+\alpha)(n+\beta)\Bigr\}\,c=0,\notag
\end{align}
\end{subequations}
where $E$~denotes the shift operator, i.e., $(E^kc)(n)\defeq c(n+k)$.  The
recurrence (\ref{eq:GHErec}), resp.~(\ref{eq:HErec}), is initialized by
$c(0)=1,$ resp.\ $c(0)=1,\allowbreak c(-1)=0$.  If $c(n)$ is taken to equal
zero when~$n<0,$ then (\ref{eq:GHErec}),(\ref{eq:HErec}) will hold for
all~$n\in\mathbb{Z}$.  The power series converges on $\left|x\right|<1,$
resp.\ $\left|x\right|<\min(1,\left|a\right|)$.

The special functions ${}_2F_1,\Hl$ play a central role not only in the
theory of second-order Fuchsian differential equations, but also in the
theory of linear recurrences with quadratic coefficients.  From
(\ref{eq:GHErec}),(\ref{eq:HErec}), one easily deduces the following.

\begin{myproposition}
\ 
  \begin{enumerate}
    \item Suppose $c\colon\mathbb{N}\to\mathbb{C}$ {\rm(}with $c(n)\defeq 0$ for
      $n<0$ by convention{\rm)}\/ satisfies a $2$-term recurrence
      $\left[P_1(n)E+P_0(n)\right]c=0,$ where $\deg P_1=\deg P_0=2,$ and
      one root of~$P_1$ is~$-1$ and the other is not an integer greater
      than~$-1$.  Then the ordinary generating function
      {\rm(}o.g.f.\/{\rm)\null} of~$c$ equals
      $C\,{}_2F_1(\alpha,\beta;\gamma;Ax)$ for some
      $A(\neq0),C\in\mathbb{C}$ and some choice of GHE
      parameters~$\alpha,\beta;\gamma$.
    \item Suppose $c\colon\mathbb{N}\to\mathbb{C}$ {\rm(}with $c(n)\defeq 0$ for
      $n<0$ by convention{\rm)}\/ satisfies a $3$-term recurrence
      $\left[P_2(n)E^2+P_1(n)E+P_0(n)\right]c=0,$ where $\deg P_2=\deg
      P_0=2,$ $\deg P_1\le2,$ and one root of~$P_2$ is~$-2$ and the other
      is not an integer greater than~$-2$.  Suppose, moreover, that the
      characteristic polynomial of this recurrence has distinct roots.
      {\rm(}This polynomial is formed from the coefficients of~$n^2$
      in~$P_2,P_1,P_0$: it equals $p_{2,2}n^2 + p_{1,2}n +
      p_{0,2}$.{\rm)}\/ Then the o.g.f.\ of~$c$ equals
      $C\Hl(a,q;\alpha,\beta;\gamma,\delta;Ax)$ for some
      $A(\neq0),C\in\mathbb{C}$ and some choice of HE
      parameters~$a,q;\alpha,\beta;\gamma,\delta$.
  \end{enumerate}  
\end{myproposition}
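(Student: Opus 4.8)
The plan is to realize $c$ as a rescaling of the relevant coefficient sequence. Writing $d$ for the coefficient sequence of ${}_2F_1(\alpha,\beta;\gamma;x)$ in part~(1), resp.\ of $\Hl(a,q;\alpha,\beta;\gamma,\delta;x)$ in part~(2), I would look for $A\neq0$, $C$, and parameters with $c(n)=CA^nd(n)$ for all~$n$; summation then gives the o.g.f.\ $\sum_nc(n)x^n=C\sum_nd(n)(Ax)^n$, which is $C\,{}_2F_1(\alpha,\beta;\gamma;Ax)$, resp.\ $C\Hl(a,q;\alpha,\beta;\gamma,\delta;Ax)$. The key observation is that substituting $d(n)=c(n)/(CA^n)$ into (\ref{eq:GHErec}), resp.~(\ref{eq:HErec}), and clearing denominators produces a recurrence of the same shape for~$c$, differing only in that the coefficient of~$c(n+k)$ is scaled by~$A^{m-k}$, where $m\in\{1,2\}$ is the order. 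It therefore suffices to match the given difference operator, up to an overall nonzero scalar~$\mu$, against this rescaled Gauss, resp.\ Heun, operator, and to read the parameters off from the coefficients.

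For part~(1), factor $P_1(n)=p_{1,2}(n+1)(n-r)$ using the root~$-1$, and $P_0(n)=p_{0,2}(n-s_1)(n-s_2)$. The rescaled Gauss operator is $\lambda(n+\gamma)(n+1)E-\lambda A(n+\alpha)(n+\beta)$, so matching the $E$-coefficient forces $\gamma=-r$ and $\lambda=p_{1,2}$, while matching the constant term forces $\{\alpha,\beta\}=\{-s_1,-s_2\}$ and $A=-p_{0,2}/p_{1,2}$. Here $A\neq0$ because $\deg P_0=2$, and the hypothesis that $r$ is not an integer exceeding~$-1$ is precisely the condition that $\gamma=-r$ not be a nonpositive integer, so that the Gauss solution is analytic as required. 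Since $-1$ is a root of~$P_1$, the $n=-1$ instance of the recurrence reads $0=0$ and leaves $c(0)$ free; setting $C\defeq c(0)$ makes $c$ and $CA^nd$ satisfy the same first-order recurrence with identical initial value, hence they coincide.

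For part~(2), the new tool is the characteristic polynomial, which I would use to pin down $A$ and the singular point~$a$ at once. A direct computation of the $n^2$-coefficients in (\ref{eq:HErec}) gives Heun characteristic polynomial $at^2-(a+1)t+1=(at-1)(t-1)$, so the rescaled Heun operator has characteristic polynomial $\mu a(t-A)(t-A/a)$, with roots $A$ and $A/a$. Matching the three $n^2$-coefficients is thus equivalent to matching characteristic polynomials, which determines $\mu,A,a$: if the given recurrence has characteristic roots $\rho_1\neq\rho_2$ (both nonzero, since their product $p_{0,2}/p_{2,2}\neq0$), put $A=\rho_1$, $a=\rho_1/\rho_2$, $\mu=p_{2,2}/a$, and distinctness gives $a\neq1$. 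The factor $(n+2)$ forced on~$P_2$ by its root~$-2$ then makes its remaining root yield~$\gamma$ (again not a nonpositive integer, by the root hypothesis), and the roots of~$P_0$ yield $\alpha,\beta$. It remains to match $P_1$ against $\mu A\,Q_1$, where $Q_1$ is the middle Heun coefficient; its $n^2$-coefficient is already secured, leaving the coefficients of $n^1$ and~$n^0$ as two linear equations in the unknowns $\delta$ and~$q$. As before, $C\defeq c(0)$ and the vacuous $n=-2$ instance settle the initialization.

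The step I expect to be the crux is the solvability of this final linear system. A short computation shows that in the $n^1$-equation the coefficient of~$\delta$ is proportional to~$a-1$, so $\delta$ is uniquely determined precisely because the distinct-roots hypothesis gives $a\neq1$; and once $\delta$ is known, the $n^0$-equation determines~$q$, since the coefficient of~$q$ there is $-\mu A\neq0$. Thus the distinct-roots condition does double duty: it separates the two characteristic roots, so that $a$ is well defined and different from~$1$, and it simultaneously guarantees that $\delta$, and hence~$q$, can be solved for. Verifying that these nondegeneracies coincide exactly with the stated hypotheses, rather than requiring anything extra, is the delicate bookkeeping at the heart of the argument.
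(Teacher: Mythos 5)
Your proof is correct, and it is essentially the argument the paper intends: the paper offers no written-out proof beyond the remark that the proposition is "easily deduced" from the recurrences (\ref{eq:GHErec}) and (\ref{eq:HErec}), and your rescaling $c(n)=CA^nd(n)$ with coefficient-matching (including using the root at $-1$, resp.~$-2$, to make the initial instances vacuous, and the distinct characteristic roots to get $a\neq1$ and solvability for $\delta$ and~$q$) is exactly the fleshed-out version of that deduction.
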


\noindent
In part~(2), the taking of the polynomial roots to be distinct avoids the
case when the differential equation satisfied by the o.g.f.\ is confluent,
i.e., non-Fuchsian.

We explain here how the classical formalism of Riemann P-symbols
(see~\S\,\ref{sec:psymbols}) facilitates the derivation of {\em Heun
identities\/}, such as alternative expressions for the local
function~$\Hl,$ written in~terms of itself.  In~\S\,\ref{sec:autos}, we begin
by determining the automorphism groups of the GHE and~HE\null.  We showed
in a previous treatment~\cite{Maier10} that the two automorphism groups, of
orders $24$ and~$192$ respectively, are the Coxeter groups $\mathcal{D}_3$
($\cong S_4$) and~$\mathcal{D}_4$.  (The set of $24$ GHE automorphisms was
first worked~out by Kummer, but its group structure long remained obscure;
an isomorphism to the octahedral group~$S_4$ was pointed~out by
Dwork~\cite{Dwork84} and others~\cite{Becken2000,Lievens2005}.)
$\mathcal{D}_3$ and~$\mathcal{D}_4$ have subgroups $\mathcal{D}_2$
and~$\mathcal{D}_3,$ which are the transformation groups of ${}_2F_1$
and~$\Hl$.  The former, of order~$4,$ is generated by classical
transformations of Pfaff and Euler, but the latter, of order~$24,$ is a
novel object.  In~\cite{Maier10}, we computed the $24$ resulting
expressions for~$\Hl(x)$ with the aid of a computer algebra system.  We
show here how with the aid of P-symbols, they may be computed by~hand.

In~\S\,\ref{sec:quadratics}, we compute quadratic and quartic transformations
of~$\Hl,$ which express $\Hl(x)$ in~terms of $\Hl(R(x)),$ where $R$~is
rational of degree $2$ or~$4$.  It is known that the derivation of the
quadratic transformations of~${}_2F_1$ is facilitated by the P-symbol
formalism~\cite[\S3.9]{Andrews99}.  Erd\'elyi~\cite[\S15.3]{Erdelyi53}
pointed~out that it can also be used to derive quadratic and quartic HE
transformations, but did not compute explicit formulae\ involving~$\Hl$.
Up~to automorphism, we find one quadratic transformation of $\Hl$ and one
quartic one that is biquadratic, i.e., is the composition of two
quadratics.  The $\Hl$ situation apparently contrasts with that
of~${}_2F_1$: not~all the quartic transformations of~${}_2F_1,$ which were
worked~out by Goursat~\cite{Goursat1881}, are biquadratic.

Using P-symbols, we investigate in~\S\,\ref{sec:Hlto3F2} the interesting case
when $\Hl$ coincides with a generalized hypergeometric function,
in~particular with a~${}_3F_2$ of the form ${}_3F_2(a_1,a_2,e+1;b_1,e;x)$.
If the parameters $a,q;\alpha,\beta;\gamma,\delta$ are suitably chosen,
this phenomenon will occur, as noted by Letessier, Valent, and
Wimp~\cite{Letessier94}.  Since the coefficients of the series expansion
of~${}_3F_2$ satisfy a first-order recurrence, the equality is attributable
to a factorization of the second-order difference operator
in~(\ref{eq:HErec}).  Such factorizations can be computed
algorithmically~\cite{Cluzeau2006,Petkovsek92}.  We give an explicit one,
and in the same spirit, factor the third-order differential operator in the
equation satisfied by the~${}_3F_2$.  The operator products in the two
factorizations can be viewed as {\em desingularizations\/} in the
sense of Abramov, Barkatou, and van Hoeij~\cite{Abramov2006}.

In~\S\,\ref{sec:3F2}, we derive transformations analogous to Pfaff's and
Euler's for the function ${}_3F_2(a_1,a_2,e+1;b_1,e;x),$ involving
nonlinear parametric constraints.  Bailey and Slater (see, e.g.,
Slater~\cite[\S\,2.4.2]{Slater66}) found an Pfaff-like identity satisfied
by~${}_3F_2,$ with three free parameters, but our identity, which extends
theirs, has four.  As a corollary, we extend other results of Bailey; e.g.,
his result that any very well poised~${}_3F_2$ can be written in~terms of a
well poised~${}_2F_1,$ and vice versa.

\section{Riemann P-symbols}
\label{sec:psymbols}

Any P-symbol tabulates the characteristic exponents of a linear
(homogeneous) Fuchsian differential equation (FDE) $\mathcal{L}u=0$
on~$\mathbb{P}^1$.  It partially characterizes the space of local
solutions.  The columns of the P-symbol list the exponents associated to
each singular point of~$\mathcal{L}$.  For instance, the GHE and HE have
P-symbols
\begin{equation}
\label{eq:Psymbols}
\left\{\!
\begin{array}{ccc|c}
0 & 1 & \infty & x \\
\hline
0 & 0 & \alpha &  \\
1-\gamma & 1-\delta & \beta & \\
\end{array}
\!\right\},
\qquad
\left\{\!
\begin{array}{cccc|c}
0 & 1 & a & \infty & x \\
\hline
0 & 0 & 0 & \alpha & \\
1-\gamma & 1-\delta & 1-\epsilon & \beta & \\
\end{array}
\!\right\},
\end{equation}
where the column and exponent orderings are not significant.  Ordinary
(non-singular) points may optionally be included, each with its own column.
Any finite ordinary point of a second-order FDE has exponents~$0,1,$ and
the point $x=\infty,$ if ordinary, has exponents~$0,-1$.  Having `ordinary'
exponents is not sufficient for ordinariness, since zero-exponent Frobenius
solutions may be logarithmic.

Any second-order FDE on~$\mathbb{P}^1$ with $n\ge3$ singular points is
determined up~to equivalence by its $2n$~exponents and $n-3$ accessory
parameters.  Though the latter parameters are not shown, P-symbols
facilitate the computation of transformed FDEs, as the following indicates.
Automorphisms of~$\mathbb{P}^1$ are M\"obius transformations (also called
homographies or linear fractional transformations) of the form
$M(x)=(Ax+B)/(Cx+D),$ where $A,B,C,D\in\mathbb{C}$ and $AD\neq BC$.  Under
such transformations, singular points are accompanied by their exponents.
That is, if $M\colon\mathbb{P}^1_x\to\mathbb{P}^1_{x'},$ applying the
change of variable ${x'}=M(x)$ to an FDE $\mathcal{L}u=0$
on~$\mathbb{P}^1_{x'}$ lifts it to $(M^*\mathcal{L})v=0$
on~$\mathbb{P}^1_x,$ where if $\mathcal{L}$ has singular points
$a',b',c',\dots,$ with respective exponents
$\alpha_1,\alpha_2;\beta_1,\beta_2;\gamma_1,\gamma_2,\dots,$ the operator
$M^*\mathcal{L}$ will have P-symbol
\begin{equation}
\label{eq:homography}
\left\{\!
\begin{array}{cccc|c}
M^{-1}a' & M^{-1}b' & M^{-1}c' & \ldots & x \\
\hline
\alpha_1 & \beta_1 & \gamma_1 & \ldots & \\
\alpha_2 & \beta_2 & \gamma_2 & \ldots & \\
\end{array}
\!\right\}
=
\left\{\!
\begin{array}{cccc|c}
a' & b' & c' & \ldots & M(x) \\ 
\hline
\alpha_1 & \beta_1 & \gamma_1 & \ldots & \\ 
\alpha_2 & \beta_2 & \gamma_2 & \ldots & \\
\end{array}
\!\right\}.
\end{equation}
Fuchs's relation states that the sum of the $2n$ exponents of any
second-order FDE with $n$~singular points equals~$n-2$.  M\"obius lifting
trivially preserves this.

Since there is a (unique) M\"obius transformation that takes any three
distinct points to any other three, one can normalize any FDE with $n\ge3$
singular points by moving three of them to~$0,1,\infty$.  Multiplying the
dependent variable by~$(x-x_0)^{-\zeta}$ will decrement the exponents
at~$x=x_0$ by~$\zeta,$ and those at~$x=\infty$ by~$-\zeta$; i.e.,
\begin{equation}
\label{eq:Fhomotopy}
(x-x_0)^{-\zeta}
\left\{\!
\begin{array}{ccc|c}
x_0 & \ldots & \infty & x \\ 
\hline
\theta_1 & \ldots & \eta_1 & \\ 
\theta_2 & \ldots & \eta_2 & \\
\end{array}
\!\right\}
=
\left\{\!
\begin{array}{ccc|c}
x_0 & \ldots & \infty & x \\ 
\hline
\theta_1-\zeta & \ldots & \eta_1+\zeta & \\ 
\theta_2-\zeta & \ldots & \eta_2+\zeta & \\
\end{array}
\!\right\}
\end{equation}
Using~(\ref{eq:Fhomotopy}), one can further normalize the FDE by shifting
one exponent at each finite singular point to zero.  For $n=3,4,$ the
resulting FDE is the GHE, resp.~HE\null.

Any nonconstant rational function ${x'}=R(x)$ specifies a $(\deg
R)$-sheeted covering $R\colon\mathbb{P}^1_x \to \mathbb{P}^1_{x'}$.
Applying the change of variables ${x'}=R(x)$ to an FDE $\mathcal{L}u=0$
on~$\mathbb{P}^1_{x'}$ lifts it to an FDE $(R^*\mathcal{L})v=0$
on~$\mathbb{P}^1_x$.

\begin{myproposition}
\label{prop:lifting}
  {\rm(i)}~The characteristic exponents of\/~$R^*\mathcal{L}$ at any
  point\/ $x_0\in R^{-1}{x'_0}$ equal those of\/~$\mathcal{L}$ at\/~${x'_0},$
  multiplied by the multiplicity with which\/ $x_0$~is mapped to\/~${x'_0}$.
  {\rm(ii)}~If the exponents of\/~$\mathcal{L}$ at\/~${x'_0}$ are\/ $0,1/k$
  for some integer\/~$k>1,$ and\/ $x_0$~is mapped to\/~${x'_0}$ with
  multiplicity\/~$k,$ then\/ $x_0$~will be an ordinary point
  of\/~$R^*\mathcal{L}$.
\end{myproposition}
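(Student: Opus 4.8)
The plan is to reduce everything to a local computation near the pair of points $x_0$ and $x'_0 = R(x_0)$ and to track how the coefficient functions of the operator transform. The guiding intuition for~(i) is that a characteristic exponent is the order of vanishing of a local solution, and a ramification of multiplicity~$k$ multiplies such orders by~$k$: if a solution behaves like $(x'-x'_0)^\rho$, then pulling back through $x'-x'_0\sim c\,(x-x_0)^k$ converts it into $(x-x_0)^{k\rho}$ times a holomorphic unit. To make this robust, and in particular to cover logarithmic cases and exponents differing by an integer, I would instead track the indicial equation itself. Write $t=x-x_0$ and $s=x'-x'_0$, treating both points as finite (the case of $\infty$ follows by a M\"obius change of chart, using the conventions of~\S\,\ref{sec:psymbols}). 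Because $x_0$ maps to $x'_0$ with multiplicity~$k$, we have $s=R(x)-x'_0=t^k\phi(t)$ with $\phi$ holomorphic and $\phi(0)\neq0$. Writing $\mathcal{L}$ as $u''+p(x')u'+q(x')u=0$ and substituting $x'=R(x)$, the chain rule presents $R^*\mathcal{L}$ as $v''+\tilde p\,v'+\tilde q\,v=0$ with
\[
\tilde p=(p\circ R)\,R'-R''/R',\qquad \tilde q=(q\circ R)\,(R')^2.
\]

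First I would read off the leading Laurent behaviour at $t=0$. Since $\mathcal{L}$ is Fuchsian at $x'_0$, there $p$ has at most a simple pole and $q$ at most a double pole, so $p=p_0/s+O(1)$ and $q=q_0/s^2+q_{-1}/s+O(1)$. Using $R'=t^{k-1}(k\phi+t\phi')$ and $R''/R'=(k-1)/t+O(1)$, a short computation gives the residues $\tilde p_0=kp_0-(k-1)$ and $\tilde q_0=k^2q_0$. Hence the indicial equation of $R^*\mathcal{L}$ at $x_0$ is $\tilde\rho^2+k(p_0-1)\tilde\rho+k^2q_0=0$, whose roots have sum $k(\rho_1+\rho_2)$ and product $(k\rho_1)(k\rho_2)$, where $\rho_1,\rho_2$ are the exponents of $\mathcal{L}$ at $x'_0$ (the roots of $\rho^2+(p_0-1)\rho+q_0=0$, so that $\rho_1+\rho_2=1-p_0$ and $\rho_1\rho_2=q_0$). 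Therefore the exponents at $x_0$ are exactly $k\rho_1$ and $k\rho_2$, which proves~(i).

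For~(ii), substituting the exponents $0,1/k$ gives $p_0=1-1/k$ and $q_0=\rho_1\rho_2=0$, so both residues collapse: $\tilde p_0=(k-1)-(k-1)=0$ and $\tilde q_0=0$. By~(i) the exponents at $x_0$ are $k\cdot0=0$ and $k\cdot(1/k)=1$. The main obstacle is precisely that exponents $0,1$ do \emph{not} by themselves force $x_0$ to be ordinary, as observed in~\S\,\ref{sec:psymbols}: one must exclude an apparent singularity, i.e.\ a genuine pole of the transformed coefficients. I would finish by verifying holomorphy of $\tilde p$ and $\tilde q$ at $x_0$. The coefficient $\tilde p$ has at most a simple pole, and its residue $\tilde p_0$ vanishes, so $\tilde p$ is holomorphic. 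For $\tilde q=(q\circ R)(R')^2$, the term $q_0/s^2$ vanishes since $q_0=0$; the potentially dangerous term $q_{-1}/s=q_{-1}/(t^k\phi)$, multiplied by $(R')^2=t^{2k-2}(k\phi+t\phi')^2$, contributes $q_{-1}\,t^{k-2}(\,\cdots)$, which is holomorphic \emph{precisely because} $k\ge2$; and the $O(1)$ part of $q$ is multiplied by $t^{2k-2}$ and so is holomorphic as well. Thus both coefficients are holomorphic at $x_0$, so $x_0$ is a true ordinary point of $R^*\mathcal{L}$, which establishes~(ii). The one place demanding care, and the step I expect to be most delicate, is exactly this holomorphy check, since it is here that the hypothesis $k\ge2$ together with the vanishing $q_0=0$ does the essential work of turning a formal singularity into an ordinary point.
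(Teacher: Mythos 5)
Your proof is correct, but it takes a genuinely different route from the paper's. For~(i), the paper simply remarks that the claim ``follows from the definition of a characteristic exponent'' (the heuristic you state at the outset: a local behaviour $(x'-x'_0)^\rho$ pulls back through $s=t^k\phi(t)$ to $t^{k\rho}$ times a unit); you instead make this rigorous by transforming the operator itself, computing $\tilde p=(p\circ R)R'-R''/R'$ and $\tilde q=(q\circ R)(R')^2$, extracting the residues $\tilde p_0=kp_0-(k-1)$, $\tilde q_0=k^2q_0$, and matching the indicial polynomials --- all of which I have checked and which is sound (and has the merit of being insensitive to logarithmic degeneracies, since indicial roots are defined regardless). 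For~(ii) the divergence is sharper: the paper's argument is solution-theoretic, not computational. It observes that the zero-exponent Frobenius solution of $(R^*\mathcal{L})v=0$ at~$x_0$ is the pullback $u(R(x))$ of the zero-exponent Frobenius solution $u$ of $\mathcal{L}$ at~$x'_0$, and that $u$ cannot be logarithmic because the other exponent there is $1/k$, not a positive integer; combined with the exponents $0,1$ from~(i), this excludes the apparent-singularity scenario. You instead verify ordinariness directly at the level of coefficients, showing $\tilde p$ is holomorphic because its residue collapses when $p_0=1-1/k$, and $\tilde q$ is holomorphic because $q_0=0$ kills the double pole while the $q_{-1}/s$ term contributes $q_{-1}t^{k-2}(\cdots)$, harmless precisely when $k\ge2$. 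Your holomorphy check is where the real content lies, and it is carried out correctly. Each approach buys something: the paper's is shorter and computation-free, but it implicitly relies on the standard fact that exponents $0,1$ together with a non-logarithmic zero-exponent solution force a true ordinary point (all solutions analytic with non-vanishing Wronskian, so $p=-W'/W$ and $q$ are analytic); your coefficient computation bypasses that lemma entirely and yields ordinariness by the definition itself, at the cost of a local Laurent calculation that you have correctly confined to finite charts via M\"obius changes of variable.
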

\begin{proof}
  Part~(i) follows from the definition of a characteristic exponent.  To
  prove~(ii), it suffices to show that the zero-exponent Frobenius solution
  of $(R^*\mathcal{L})v=0$ at~$x=x_0$ cannot be logarithmic.  By the
  definition of a lifting, this solution $v=v(x)$ (normalization being
  arbitrary) can be written locally as $u(R(x)),$ where $u=u(x)$ is a
  zero-exponent Frobenius solution of $\mathcal{L}u=0$.  But such a
  solution can be logarithmic only if the other exponent is a positive
  integer; and here it is~$1/k$.
\end{proof}

By examination, lifting $\mathcal{L}$ to~$R^*\mathcal{L}$ preserves Fuchs's
relation.  But under a non-M\"obius rational lifting, singular points may
disappear or appear.  Part~(ii) of the proposition indicates how an inverse
image of a singular point may fail to be singular.  Also, if ${x'_0}$~is an
ordinary point of~$\mathcal{L},$ with $x_0\in R^{-1}{x'_0}$ mapped by~$R$
to~${x'_0}$ with multiplicity $k>1,$ then $x_0$~will be a singular point
of~$R^*\mathcal{L},$ with exponents~$0,k$.

With the aid of Prop.~\ref{prop:lifting}, one can conjecture and verify
rational transformation laws for the special functions ${}_2F_1$ and~$\Hl$.
However, in any such identity the accessory parameter~$q$ of the HE will
need to be considered separately.  It should be mentioned that there are
types of Heun identity, such as differential transformations, on which
P-symbols throw less light.  One example is the following.

\begin{mytheorem}[\cite{Umetsu2000},\S3]
  For every integer $N\ge0,$ one has the differential Heun identity
  \begin{displaymath}
    D^N\Hl(a,q;\,1-N,\beta;\,\gamma,\delta;\,x) \propto
    \Hl(a,q';\,1+N,\beta+N;\,\gamma+N,\delta+N;\,x),
  \end{displaymath}
  where $q'\defeq q + N(N-1)(a+1) + N\left[(a+1)\gamma + a\delta + \epsilon\right]$.
\end{mytheorem}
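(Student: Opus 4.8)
My plan is to set aside P-symbols---as the preceding remark warns, they are blind to the accessory parameter, and here it is precisely $q'$ that carries the content---and to argue at the level of Taylor coefficients, where the assertion becomes a statement about the three-term recurrence~(\ref{eq:HErec}). A quick exponent tally already makes the parameter shifts plausible: differentiating $N$~times lowers each nonzero finite exponent $1-\gamma,1-\delta,1-\epsilon$ by~$N$ (while the zero exponent at $x=0$ is preserved, $\Hl$ remaining analytic there), and at $x=\infty$ it raises $\beta$ to $\beta+N$ and raises $1-N$ all the way to $1+N$, since the Frobenius solution with exponent $1-N$ has a polynomial part of degree $N-1$ that $D^N$ annihilates. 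These are exactly the exponents of the target P-symbol; what no such tally can produce is the value of $q'$, so a genuine coefficient computation is unavoidable.

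Accordingly, write $u\defeq\Hl(a,q;1-N,\beta;\gamma,\delta;x)=\sum_{n\ge0}c(n)\,x^n$, so that $c$ solves~(\ref{eq:HErec}) with $\alpha=1-N$ and $c(-1)=0$, $c(0)=1$; since $D^N$ carries this o.g.f.\ to $\sum_{n\ge0}\tilde c(n)\,x^n$ with $\tilde c(n)\defeq(n+1)(n+2)\cdots(n+N)\,c(n+N)$, it suffices to show that $\tilde c$ solves~(\ref{eq:HErec}) under the advanced parameters $a,q';1+N,\beta+N;\gamma+N,\delta+N$. I would substitute $c(n+N)=\tilde c(n)\,/\,[(n+1)\cdots(n+N)]$ into~(\ref{eq:HErec}) taken at index $m=n+N$ and multiply through by $(n+2)(n+3)\cdots(n+N+1)$. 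After the telescoping cancellations this clears every denominator in the coefficients of $\tilde c(n+2)$ and $\tilde c(n+1)$ outright, leaving the polynomials $a(n+\gamma+N+1)(n+2)$ and $-\bigl[(n+N+1)(n+N+\gamma+\delta)a+(n+N+1)(n+N+\gamma+\epsilon)+q\bigr]$. The coefficient of $\tilde c(n)$, however, comes out as $(n+N+\alpha)(n+N+\beta)\,\frac{n+N+1}{n+1}$, which is a polynomial \emph{only} when the numerator supplies a compensating factor $n+1$. This is the whole role of the hypothesis $\alpha=1-N$: it makes $(n+N+\alpha)=n+1$, so the pole at $n=-1$ disappears and the coefficient reduces to $(n+N+1)(n+N+\beta)$. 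For any other $\alpha$ the surviving pole is the coefficient-level shadow of the spurious apparent singularity that a single differentiation introduces at the zero of $\alpha\beta x-q$.

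With $\alpha=1-N$ in force, the outer coefficients $a(n+\gamma+N+1)(n+2)$ and $(n+N+1)(n+N+\beta)$ coincide verbatim with the $E^2$- and $E^0$-coefficients of~(\ref{eq:HErec}) under $\gamma\mapsto\gamma+N$, $\beta\mapsto\beta+N$, $\alpha\mapsto1+N$. The middle coefficient already agrees with the advanced form in its $n^2$- and $n^1$-terms, so the two recurrences can differ only in the $n$-independent part of that middle coefficient; equating those constants is a single linear equation for~$q'$. Carrying it out returns $q'=q+N(N-1)(a+1)+N\bigl[(a+1)\gamma+a\delta+\epsilon\bigr]$, the stated value.

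It remains only to pin down which solution of the advanced recurrence $\tilde c$ is. The rising-factorial weight forces $\tilde c(n)=0$ for every $n<0$ and gives $\tilde c(0)=N!\,c(N)$, so $\tilde c$ is the solution of the advanced recurrence with data $(\tilde c(-1),\tilde c(0))=(0,N!\,c(N))$. As that recurrence is second order with leading coefficient nonvanishing at the indices used going forward, it is determined by these two values; hence $\tilde c$ equals $N!\,c(N)$ times the coefficient sequence of $\Hl(a,q';1+N,\beta+N;\gamma+N,\delta+N;x)$, and passing back to generating functions yields the asserted proportionality (the degenerate case $c(N)=0$, where $D^N u\equiv0$, being harmlessly absorbed by~$\propto$). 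The one genuinely delicate point is the bookkeeping of the three coefficients through the clearing of denominators---in particular noticing that the trailing coefficient alone retains a simple pole whose cancellation is equivalent to $\alpha=1-N$; once that is in hand, both the value of $q'$ and the identification of the function are forced.
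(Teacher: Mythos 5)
Your proof is correct, and the essential point of comparison is that the paper itself contains \emph{no} proof of this theorem: the result is imported from Umetsu~\cite{Umetsu2000}, and the surrounding discussion merely strengthens it to the P-symbol identity~(\ref{eq:extended}), observes that the formal correspondence of exponents makes that identity plausible, and then explicitly declines to argue further, remarking that a satisfactory proof along those lines ``would require a study of the series coefficients'' and that $q'$ ``needs to be computed separately.'' Your recurrence-level argument carries out exactly that program for the one solution the theorem concerns: the opening exponent tally is the paper's plausibility sketch demoted to motivation, and the substance---substituting $\tilde c(n)=(n+1)\cdots(n+N)\,c(n+N)$ into~(\ref{eq:HErec}) at index $n+N$, clearing denominators, noting that $\alpha=1-N$ is precisely what turns the trailing coefficient $(n+N+\alpha)(n+N+\beta)\frac{n+N+1}{n+1}$ into the polynomial $(n+N+1)(n+N+\beta)$, and reading $q'$ off the constant term of the middle coefficient---checks out; the cleared coefficients match the recurrence with parameters $a,q';1+N,\beta+N;\gamma+N,\delta+N$ (using $\epsilon'=\epsilon+N$), and the single linear equation does return $q'=q+N(N-1)(a+1)+N\left[(a+1)\gamma+a\delta+\epsilon\right]$. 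Two small points merit an explicit sentence in a polished write-up: your denominator-clearing literally establishes the transformed recurrence only for $n\ge0$, so the instance at $n=-1$, which you need in order to propagate from the data $(\tilde c(-1),\tilde c(0))=(0,N!\,c(N))$, must be checked separately---it follows at once from the original recurrence at index $N-1$, whose trailing coefficient $(N-1+\alpha)(N-1+\beta)$ vanishes when $\alpha=1-N$, matching $\tilde c(-1)=0$; and the forward determination requires the leading coefficient $(n+\gamma+N+1)(n+2)a$ to be nonzero for $n\ge-1$, which holds since $a\neq0$ and $\gamma+N$ is not a nonpositive integer under the standing assumption on~$\gamma$. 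Finally, be aware of what you have and have not proved: your argument establishes the theorem as stated, a claim about $\Hl$, i.e., about the analytic Frobenius solution at $x=0$, but not the stronger identity~(\ref{eq:extended}), which quantifies over all local solutions---that is precisely the part the paper flags as delicate and leaves open, so your proof complements rather than duplicates the text.
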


\noindent
This result can be strengthened to an equality between HE P-symbols, namely
\begin{multline}
\label{eq:extended}
D^N 
\left\{\!
\begin{array}{cccc|c}
0 & 1 & a & \infty & x \\
\hline
0 & 0 & 0 & 1-N & \\
1-\gamma & 1-\delta & 1-\epsilon & \beta & \\
\end{array}
\!\right\}\\
=
\left\{\!
\begin{array}{cccc|c}
0 & 1 & a & \infty & x \\
\hline
0 & 0 & 0 & 1+N & \\
1-(\gamma+N) & 1-(\delta+N) & 1-(\epsilon+N) & \beta+N & \\
\end{array}
\!\right\},
\end{multline}
which says that any local solution of the left-hand HE, differentiated
$N$~times, is one of the right-hand~HE\null.  The P-symbol
identity~(\ref{eq:extended}) is plausible, since the exponents on left and
right formally correspond.  For instance, at~$x=0$ the left-hand Frobenius
solutions are of the form $x^0$ and $x^{1-\gamma}$ times analytic
functions, each nonzero at~$x=0$; so (generically only!\null), their $N$'th
derivatives are of the form $x^0$ and $x^{1-(\gamma+N)}$ times such
analytic functions.  But a satisfactory proof of~(\ref{eq:extended}) along
these lines would require a study of the series coefficients of the
Frobenius solutions, since if any non-leading coefficient were zero, it
would yield non-generic behavior.  And of~course, $q'$~needs to be computed
separately.  In the following sections the focus is on non-differential
identities, which are easier to conjecture and prove.

\section{Automorphisms of the GHE and HE}
\label{sec:autos}

P-symbols of normalized FDEs with $n\ge3$ singular points can be written in
a symmetrical way.  Without loss of generality, the singular points may be
taken to include $x=0,\infty$; and typically, $x=1$ as~well.  Each finite
singular point may be taken to have a zero exponent.  The P-symbol of such
an FDE will be of the form
\begin{equation}
  \left\{\!
\begin{array}{ccccc|c}
a_1 & a_2 & \ldots & a_{n-1} & \infty & x \\
\hline
0 & 0 & \ldots & 0 & \alpha & \\
\theta_1 & \theta_2 & \ldots & \theta_{n-1} & \beta\defeq \alpha+\theta_\infty & \\
\end{array}
\!\right\},
\end{equation}
where $a_1=0$ (and usually, $a_2=1$).  The canonical FDE solution~$F$
at~$x=a_1=0$ will be written
$F(a_2,\dots,a_{n-1};q;\theta_1,\dots,\theta_{n-1},\theta_\infty;x),$ where
$q$~is a suitably defined vector of $n-3$~accessory parameters.
Necessarily $\alpha=(n-2-\theta_1-\cdots-\theta_{n-1}-\theta_\infty)/2$.

The automorphism group of such normalized FDEs, or P-symbols, contains a
subgroup of automorphisms that negate exponents, or more accurately
exponent {\em differences\/}.  These are called
F-homotopies~\cite{Ronveaux95}.  The subgroup is generated by
\begin{displaymath}
(x-a_i)^{-\theta_i}
\left\{\!
\begin{array}{cccc|c}
\ldots & a_i & \ldots & \infty & x \\
\hline
\ldots & 0 & \ldots & \alpha & \\
\ldots & \theta_i & \ldots & \alpha+\theta_\infty & \\
\end{array}
\!\right\}
=
\left\{\!
\begin{array}{cccc|c}
\ldots & a_i & \ldots & \infty & x \\
\hline
\ldots & 0 & \ldots & \alpha+\theta_i & \\
\ldots & -\theta_i & \ldots & \alpha+\theta_\infty+\theta_i & \\
\end{array}
\!\right\},
\end{displaymath}
$i=1,\dots,n-1,$ and by the trivial $\alpha\leftrightarrow\beta$
interchange at~$x=\infty,$ i.e.,
\begin{displaymath}
\left\{\!
\begin{array}{cccc|c}
a_1 & \ldots & a_{n-1} & \infty & x \\
\hline
0 & \ldots & 0 & \alpha & \\
\theta_1 & \ldots & \theta_{n-1} & \alpha+\theta_\infty & \\
\end{array}
\!\right\}
=
\left\{\!
\begin{array}{cccc|c}
a_1 & \ldots & a_{n-1} & \infty & x \\
\hline
0 & \ldots & 0 & \alpha+\theta_\infty & \\
\theta_1 & \ldots & \theta_{n-1} & \alpha & \\
\end{array}
\!\right\}.
\end{displaymath}
The group of F-homotopies is isomorphic to $(\mathbb{Z}_2)^n$; or
$(\mathbb{Z}_2)^{n-1},$ if $\alpha\leftrightarrow\beta$ is excluded.

An F-homotopy at any of the $n-1$ singular points other than $x=a_1=0$
yields a transformation of the canonical local solution $F=F(x)$ at
$x=a_1=0$.  Taking the usual normalization $F(x=a_1=0)=1$ into account, one
deduces
\begin{align}
  &F(a_2,\dots,a_{n-1};\,q;\,\theta_1,\dots,\theta_i,\dots,\theta_{n-1},\theta_\infty;\,x)\label{eq:nontrivial}\\
  &\qquad=(1-x/a_i)^{\theta_i}
  F(a_2,\dots,a_{n-1};\,q';\,\theta_1,\dots,-\theta_i,\dots,\theta_{n-1},\theta_\infty;\,x),\notag
\end{align}
$i=2,\dots,n-1,$ where for each $i,$ the transformed vector~$q'$ of
accessory parameters must be computed separately, by manipulating the
FDE\null.  One also has
\begin{align}
  &\hskip-0.44in F(a_2,\dots,a_{n-1};\,q;\,\theta_1,\dots,\theta_{n-1},\theta_\infty;\,x)\label{eq:trivial}\\
  &\qquad\qquad=F(a_2,\dots,a_{n-1};\,q;\,\theta_1,\dots,\theta_{n-1},-\theta_\infty;\,x).\notag
\end{align}
The group of F-homotopic transformations of~$F$ is isomorphic
to~$(\mathbb{Z}_2)^{n-1}$; or to~$(\mathbb{Z}_2)^{n-2},$ if the trivial
transformation~(\ref{eq:trivial}), i.e., $\alpha\leftrightarrow\beta,$ is
excluded.

The GHE case $n=3,$ in which there are no accessory parameters, illustrates
this.  The identity~(\ref{eq:nontrivial}), when $n=3$ and~$i=2,$ says that
\begin{equation}
\label{eq:euler}
{}_2F_1(\alpha,\beta;\,\gamma;\,x) = (1-x)^{\gamma-\alpha-\beta}\,{}_2F_1(\gamma-\alpha,\gamma-\beta;\,\gamma;\,x),
\end{equation}
and (\ref{eq:trivial})~simply says that
${}_2F_1(\alpha,\beta;\gamma;x)={}_2F_1(\beta,\alpha;\gamma;x)$.
Equation~(\ref{eq:euler}) is Euler's transformation
of~${}_2F_1$~\cite{Andrews99}, which is an involution that commutes
with~$\alpha\leftrightarrow\beta$.

The automorphism group of normalized FDEs $\mathcal{L}u=0$ with
$n$~singular points, or strictly of their P-symbols, also contains a
subgroup of M\"obius automorphisms, based on M\"obius transformations.  If
$\mathcal{L}u=0$ is to be lifted to $(M^*\mathcal{L})v=0$ along
$M\colon\mathbb{P}_x^1\to\mathbb{P}_{x'}^1,$ the inverse images of the
singular points ${x'}=0,1,\infty$ of~$L$ must be chosen to be singular
points of $M^*\mathcal{L}$.  So there are $n(n-1)(n-2)$ possibilities, and
there is no~additional freedom in the choice of~$M$.  But since the order
of the remaining $n-3$ columns in the lifted P-symbol is not significant,
one may permute them arbitrarily.  So the M\"obius subgroup is isomorphic
to the symmetric group~$S_n$.

The cases $n=3,4$ illustrate this.  In the $n=3$ GHE case, $x'=M(x)$
may be any of $3\cdot2\cdot1=6$ homographies.  The correspondence to
permutations of $0,1,\infty$ is
\begin{displaymath}
\label{eq:GHEmob}
  \begin{array}{cccccc}
    x, & \frac{x}{x-1}, & 1-x, & \frac{x-1}{x}, & \frac{1}{x}, & \frac{1}{1-x}\\
    (0)(1)(\infty), & (0)(1\infty), & (01)(\infty), & (0\infty1), &
    (0\infty)(1), & (01\infty).\\
  \end{array}
\end{displaymath}
In the $n=4$ HE case, $x'=M(x)$ may be any of
$4\cdot3\cdot2=24$~homographies, given in~\cite{Maier10}.  The six that
stabilize $x=0,$ which are of primary interest here, are
\begin{displaymath}
\label{eq:HEmob}
  \begin{array}{cccccc}
    x, & \frac{x}{x-1}, & \frac{(1-a)x}{x-a}, & \frac{(a-1)x}{a(x-1)}, &
    \frac{x}{a}, & \frac{x}{x-a} \\
    (0)(1)(a)(\infty), & (0)(1\infty)(a), & (0)(1)(a\infty), & (0)(1\infty
    a), & (0)(1a)(\infty), & (0)(1a\infty) \\
    a, & \frac{a}{a-1}, & 1-a, & \frac{a-1}{a}, & \frac{1}{a}, & \frac{1}{1-a}.\\  \end{array}
\end{displaymath}
Each~$M$ corresponds to a permutation of the HE singular points
$0,1,a,\infty,$ i.e., columns, as shown.  But the permutation is to be
interpreted in a special way.  For instance, $(0)(1a\infty)$ means
$0\mapsto0,$ $1\mapsto a',$ $a\mapsto\infty,$ $\infty\mapsto1,$ for
some~$a'$.  In~general, $a\in\mathbb{P}^1_x$ and $a'\in\mathbb{P}^1_{x'}$
differ.  The value of~$a',$ in~terms of~$a,$ is given in the third row.

A M\"obius automorphism based on a lifting along ${x'}=M(x)$ must in~general
include an exponent shift, of the type shown in~(\ref{eq:Fhomotopy}), in
order to keep one exponent zero at each finite singular point.  The
formulas for the M\"obius tranformations of the local solution $F=F(x)$
at~$x=0$ will include such factors.  Any of the above $n(n-1)(n-2)$
M\"obius transformations that fixes the point $x=a_1=0$ will yield a
transformation of~$F$.  If the map ${x'}=M(x)$ is not of the form ${x'}=Ax,$ it
must be of the form $M(x)=Ax/[x-M^{-1}(\infty)],$ where $M^{-1}(\infty)$~is
one of~$a_2,\dots,a_{n-1}$.  Taking $F(x=a_1=0)=1$ into account, one
obtains a transformation of the type
\begin{multline}
\label{eq:masterM}
  F(a_2,\dots,a_{n-1};\, q;\,
  \theta_1,\dots,\theta_{n-1},\theta_\infty;\,x)\\
  =
  \left[1-x/M^{-1}(\infty)\right]^{-\alpha}
  F(a_2',a_3',\dots,a'_{n-1};\, q';\,
  \theta_1',\dots,\theta'_{n-1},\theta'_\infty;\,M(x)),
\end{multline}
in which the transformed vector~$q'$ of accessory parameters must be
computed separately.  Any such formula is really $(n-3)!$ formally distinct
transformations of~$F,$ since the $n-3$~parameter pairs
$(a_3,\theta_3),\dots,(a_{n-1},\theta_{n-1})$ on the left-hand side may be
permuted arbitrarily.  (Equivalently, the columns of the left-hand
P-symbol, other than the $x=0,1,\infty$ columns, may be permuted.)  The
group of M\"obius transformations of~$F$ is isomorphic to~$S_{n-1}$.

The $n=3$ GHE case is illustrative.  Of the $3\cdot2\cdot1=6$ homographies
above, the only two that fix $x=0$ are the identity $x\mapsto x$ and
$x\mapsto\frac{x}{x-1}$; they make~up a group isomorphic to~$S_2$.  By
examination, the latter yields the identity
\begin{equation}
\label{eq:pfaff}
{}_2F_1(\alpha,\beta;\,\gamma;\,x) = (1-x)^{-\alpha}\,{}_2F_1(\alpha,\gamma-\beta;\,\gamma;\,\tfrac{x}{x-1}),
\end{equation}
valid near~$x=0$.  This is Pfaff's transformation
of~${}_2F_1$~\cite{Andrews99}, which is an involution.

Any M\"obius automorphism permutes the columns of the P-symbol, and any
F-homotopic one acts on columns individually.  So the $S_n$ subgroup of the
former normalizes the $(\mathbb{Z}_2)^n$ subgroup of the latter, and the
full group of automorphisms is isomorphic to a~semidirect product
$(\mathbb{Z}_2)^n\rtimes S_n$.  It is easy to see that this is the wreath
product $\mathbb{Z}_2\wr S_n,$ the group of signed permutations of an
$n$-set.  Any element of $\mathbb{Z}_2\wr S_n\cong(\mathbb{Z}_2)^n\rtimes
S_n$ is an ordered pair $(h,\sigma),$ comprising a M\"obius and an
F-homotopic automorphism, which act from right to left.  The first permutes
the singular points, i.e., columns of the P-symbol, according to $\sigma\in
S_n,$ and the second negates zero or more of the exponent differences
$\theta_1,\dots,\theta_n$.  Any element $(h,\sigma)$ can accordingly be
written in a sign-annotated version of disjoint cycle notation.  For
instance, $[0_+][1_+a_-\infty_-]$ signifies the signed permutation
$0\mapsto0,$ $1\mapsto a',$ $a\mapsto\infty,$ $\infty\mapsto1,$ followed by
negations at $x=\infty$ and~$x=1$.  `Even-signed' permutations, such as
this one, include an even number of minus signs; they play a special role.

\begin{mytheorem}
\label{thm:main}
  The automorphism group of normalized second-order FDEs on\/
  $\mathbb{P}^1$ with $n\ge3$ singular points is the Coxeter group\/
  $\mathcal{B}_n\defeq \mathbb{Z}_2\wr S_n,$ the group of signed permutations of
  an $n$-set. If the transformation $\alpha\leftrightarrow\beta$ is not
  explicitly included as an element, the automorphism group will be the
  Coxeter group\/ $\mathcal{D}_n\defeq \left[\mathbb{Z}_2\wr S_n\right]_{\rm
  even},$ the group of even-signed permutations of an $n$-set.  The
  transformation group of the canonical solution $F=F(x)$ is\/
  $\mathcal{B}_{n-1},$ or\/ $\mathcal{D}_{n-1}$ if
  $\alpha\leftrightarrow\beta$ is not explicitly included.
\end{mytheorem}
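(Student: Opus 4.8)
The plan is to assemble the theorem from the structural facts already established in the preceding discussion, treating the automorphism group as generated by the two explicitly described families of transformations. First I would observe that the preceding paragraphs have already done the hard conceptual work: the F-homotopies form a subgroup isomorphic to $(\mathbb{Z}_2)^n$ (each of the $n$ columns, including the $\alpha\leftrightarrow\beta$ interchange at $\infty$, carries its own $\mathbb{Z}_2$ factor acting by sign-negation of the exponent difference $\theta_i$), and the M\"obius automorphisms form a subgroup isomorphic to $S_n$ (the $n(n-1)(n-2)$ liftings that fix $\{0,1,\infty\}$ as a set, together with the free permutation of the remaining $n-3$ columns, realize exactly the $n!$ permutations of the $n$ singular points). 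So the two generating subgroups and their isomorphism types are in hand.

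The core step is to verify that the full automorphism group is the semidirect product, i.e.\ that $S_n$ normalizes $(\mathbb{Z}_2)^n$ and that the two subgroups intersect trivially and generate everything. Here I would invoke the structural remark already made in the text: a M\"obius automorphism \emph{permutes} the columns of the P-symbol while an F-homotopy \emph{acts on columns individually} (negating exponent differences). Conjugating a single-column sign-change by a column permutation yields the sign-change at the permuted column — this is precisely the defining action of $S_n$ on $(\mathbb{Z}_2)^n$ by coordinate permutation — so normalization holds and the product is $(\mathbb{Z}_2)^n\rtimes S_n$. Trivial intersection follows because a nonidentity F-homotopy changes exponents without permuting columns whereas a nonidentity M\"obius map genuinely permutes them. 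I would then cite the standard identification of $(\mathbb{Z}_2)^n\rtimes S_n$ (with $S_n$ acting by coordinate permutation) as the wreath product $\mathbb{Z}_2\wr S_n$, the signed-permutation group, which is the Coxeter group $\mathcal{B}_n$.

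For the $\mathcal{D}_n$ assertion, I would note that excluding $\alpha\leftrightarrow\beta$ removes one $\mathbb{Z}_2$ factor but, more to the point, restricts the sign-changes to those with an \emph{even} total number of negations; the even-signed permutations form the index-two subgroup $[\mathbb{Z}_2\wr S_n]_{\mathrm{even}}=\mathcal{D}_n$. The one subtlety to justify is \emph{why} dropping $\alpha\leftrightarrow\beta$ corresponds exactly to the even-sign constraint rather than to deleting a single coordinate: the point is that Fuchs's relation ties the exponent sum to $n-2$, so the admissible sign-changes that preserve the normalized form (one zero exponent per finite column) are constrained to an even number of negations once the $\infty$-interchange is suppressed. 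Finally, the transformation group of the canonical solution $F$ is obtained by passing to the stabilizer of the column $x=a_1=0$: among M\"obius automorphisms only those fixing $x=0$ act on $F$, giving $S_{n-1}$, and the F-homotopies at the remaining $n-1$ columns give $(\mathbb{Z}_2)^{n-1}$, so the transformation group is $\mathcal{B}_{n-1}$, or $\mathcal{D}_{n-1}$ when $\alpha\leftrightarrow\beta$ is excluded. The main obstacle I anticipate is the even-sign bookkeeping in the $\mathcal{D}_n$ case — making the passage from ``exclude one involution'' to ``restrict to even-signed permutations'' genuinely precise rather than merely plausible — since everything else reduces to recognizing a familiar semidirect-product presentation of the hyperoctahedral group.
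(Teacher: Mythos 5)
Most of your proposal tracks the paper's (very short) proof: the paper likewise assembles the theorem from the preceding discussion --- the $(\mathbb{Z}_2)^n$ of F-homotopies is normalized by the $S_n$ of M\"obius column permutations, giving the wreath product $\mathbb{Z}_2\wr S_n=\mathcal{B}_n$, and the transformation group of~$F$ is the stabilizer consisting of automorphisms that fix $x=0$ and perform no F-homotopy there. But the step you yourself flagged as the main obstacle --- why excluding $\alpha\leftrightarrow\beta$ yields the \emph{even-signed} subgroup $\mathcal{D}_n$ --- is exactly where your argument fails. You claim that Fuchs's relation constrains the admissible sign-changes to an even number of negations. It does not: an F-homotopy at a single finite point $a_i$ multiplies solutions by $(x-a_i)^{-\theta_i}$, sending the exponents at $a_i$ from $(0,\theta_i)$ to $(-\theta_i,0)$ and those at $\infty$ from $(\alpha,\beta)$ to $(\alpha+\theta_i,\beta+\theta_i)$; the exponent sum changes by $-2\theta_i+2\theta_i=0$, so Fuchs's relation is preserved automatically, $\theta_\infty$ keeps its sign, and the result is a perfectly admissible \emph{odd}-signed automorphism (one negation). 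Indeed the paper explicitly exhibits odd-signed transformations such as $[1_-][\infty_+]$ in the coset list~(\ref{eq:secondhalf}), so any argument purporting to show that odd-signed elements are inadmissible proves too much.

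The correct mechanism, which is what the paper's proof actually uses, is not admissibility but identification: the negation $\theta_\infty\mapsto-\theta_\infty$, i.e.\ $\alpha\leftrightarrow\beta$, leaves the FDE itself invariant, since the exponents at $\infty$ form an unordered pair and the normalization (a zero exponent at each \emph{finite} singular point) singles out neither of them. Hence every automorphism with an odd number of negations can be followed by $\theta_\infty\mapsto-\theta_\infty$ to produce an even-signed element inducing the very same transformation of FDEs; choosing these even representatives --- which form a subgroup, since even-signedness is multiplicative --- identifies the automorphism group with $\mathcal{D}_n$ once $\alpha\leftrightarrow\beta$ is no longer counted as a distinct element. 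This also answers your own question of why the result is the even-signed subgroup rather than the deletion of a single $\mathbb{Z}_2$ coordinate: the set of sign vectors trivial in one fixed coordinate is not stable under conjugation by the column permutations $S_n$, whereas $\mathcal{D}_n$ is. Replace the Fuchs's-relation argument with this relabeling argument and the rest of your proposal matches the paper.
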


\begin{myremark*}
  The orders of these four groups are $\left|\mathcal{B}_n\right|=2^nn!,$
  $\left|\mathcal{D}_n\right|=2^{n-1}n!,$
  $\left|\mathcal{B}_{n-1}\right|=2^{n-1}(n-1)!,$ and
  $\left|\mathcal{D}_{n-1}\right|=2^{n-2}(n-2)!$.  It is readily verified
  that $\mathcal{B}_3,\mathcal{D}_3$ are isomorphic to $\mathbb{Z}_2\times
  S_4$ and~$S_4$; and that $\mathcal{B}_2,\mathcal{D}_2$ are isomorphic to
  the dihedral group of order~$8$ and the Klein four-group
  $(\mathbb{Z}_2)^2$.  The groups $\mathcal{B}_n,\mathcal{D}_n$ are usually
  not defined as permutation groups, but as reflection groups consistent
  with certain Coxeter graphs~\cite{Grove85}.  Due~to this, the notation
  used here is slightly nonstandard: our~$\mathcal{D}_3$ is usually
  denoted~$\mathcal{A}_3,$ and our~$\mathcal{D}_2$ is not usually called a
  Coxeter group.
\end{myremark*}

\begin{proof}
  This theorem follows from the preceding discussion, the transformation
  group of~$F$ comprising all automorphisms that fix the singular point
  $x=0,$ and perform no~F-homotopy there.  If $\alpha\leftrightarrow\beta,$
  i.e., $\theta_\infty\mapsto-\theta_\infty,$ is not explicitly included,
  then one may modify each automorphism with an odd number of F-homotopies
  (negations) by following it by $\theta_\infty\mapsto-\theta_\infty,$ to
  coerce even-signedness.  This alteration is innocuous, since it leaves
  the FDE invariant.
\end{proof}

\begin{myexample*}[$n=3$]
The correspondence between the elements of~$\mathcal{D}_2,$ the group of
even-signed permutations of the nonzero singular points $x=1,\infty$ of the
GHE, and transformations of~${}_2F_1$ is
\begin{subequations}
\label{eq:firsthalf}
\begin{eqnarray}
\label{eq:firsthalffirst}
{}[1_+][\infty_+] & \quad\sim\quad &{}_2F_1(\alpha,\beta;\,\gamma;\,x)\vphantom{\tfrac{x}{x-1}},\\
{}[1_-][\infty_-] & \quad\sim\quad &(1-x)^{\gamma-\alpha-\beta}\,{}_2F_1(\gamma-\alpha,\gamma-\beta;\,\gamma;\,x)\vphantom{\tfrac{x}{x-1}},\label{eq:Euler2}\\
{}[1_+\infty_+] & \quad\sim\quad &(1-x)^{-\alpha}\,{}_2F_1(\alpha,\gamma-\beta;\,\gamma;\,\tfrac{x}{x-1}),\label{eq:Pfaff2}\\
{}[1_-\infty_-] & \quad\sim\quad &(1-x)^{-\beta}\,{}_2F_1(\gamma-\alpha,\beta;\,\gamma;\,\tfrac{x}{x-1}).\label{eq:Pfaff3}
\end{eqnarray}
\end{subequations}
Here (\ref{eq:Euler2}),(\ref{eq:Pfaff2}) are Euler's and Pfaff's
transformations, and (\ref{eq:Pfaff3})~is a twisted Pfaff's transformation
in which the first and second arguments, and also $\alpha$~and~$\beta,$ are
interchanged.  Substituting any of
(\ref{eq:Euler2}),(\ref{eq:Pfaff2}),(\ref{eq:Pfaff3}) for $F(a,b;c;x)$ is
an involutory operation.  Any two of these three operations commute, and
their product is the third, confirming that $\mathcal{D}_2$~is isomorphic
to the Klein four-group.  If the transformation group of~${}_2F_1$ is
extended by explicitly including the involution
$\alpha\leftrightarrow\beta,$ the group becomes not~$\mathcal{D}_2,$
but~$\mathcal{B}_2$.  The nontrivial coset of~$\mathcal{D}_2$
in~$\mathcal{B}_2$ comprises
\begin{subequations}
\label{eq:secondhalf}
\begin{eqnarray}
\label{eq:secondhalffirst}
{}[1_+][\infty_-] & \quad\sim\quad &{}_2F_1(\beta,\alpha;\,\gamma;\,x)\vphantom{\tfrac{x}{x-1}},\\
{}[1_-][\infty_+] & \quad\sim\quad &(1-x)^{\gamma-\alpha-\beta}\,{}_2F_1(\gamma-\beta,\gamma-\alpha;\,\gamma;\,x)\vphantom{\tfrac{x}{x-1}},\\
{}[1_+\infty_-] & \quad\sim\quad &(1-x)^{-\beta}\,{}_2F_1(\beta,\gamma-\alpha;\,\gamma;\,\tfrac{x}{x-1}),\\
{}[1_-\infty_+] & \quad\sim\quad &(1-x)^{-\alpha}\,{}_2F_1(\gamma-\beta,\alpha;\,\gamma;\,\tfrac{x}{x-1}).
\label{eq:secondhalflast}
\end{eqnarray}
\end{subequations}
Examination confirms that as expected, the set of transformations
of~${}_2F_1(x)$ in (\ref{eq:firsthalf}),(\ref{eq:secondhalf}), under
composition, is isomorphic to the order-$8$ dihedral group.

The automorphism group of the GHE is isomorphic to~$\mathcal{D}_3\cong S_4$
(of order~$24$), if $\alpha\leftrightarrow\beta$ is not included.  The
group~$\mathcal{D}_2$ yielding
(\ref{eq:firsthalffirst})\textendash(\ref{eq:Pfaff3}) is a proper subgroup;
a~GHE automorphism may be more general than a transformation of~${}_2F_1$.
In~all, $\mathcal{D}_3$ yields $24$~solutions of the GHE, which are the
well-known solutions of Kummer. They are partitioned into $6$~equivalence
classes of size~$4,$ one of which is
(\ref{eq:firsthalffirst})\textendash(\ref{eq:Pfaff3}).
\end{myexample*}

\begin{myexample*}[$n=4$]
For any of the six homographies $x'=M(x)$ given above, which in~effect
permute the nonzero singular points $x=1,a,\infty$ of the HE, one can
rewrite (\ref{eq:masterM}) in HE notation, obtaining a M\"{o}bius
transformation of~$\Hl$.  The painful part is computing the transformed
accessory parameter.  The usual parametrization~(\ref{eq:HE}) of the HE is
not adapted to this, and a closely related one is better.

First, write $q=\alpha\beta Q,$ where $Q$~is an alternative accessory
parameter.  ($Q,$ not~$q,$ was Heun's accessory parameter; the present
convention was introduced by Erd\'elyi.)  Second, perform a change of
variable, $\bar x=a/x,$ i.e., $(0\infty)(1a),$ accompanied by an F-homotopy
that restores the transformed HE to HE form.  In P-symbol terms,
\begin{multline}
x^{\alpha}\left\{\!
\begin{array}{cccc|c}
0 & 1 & a & \infty & x \\
\hline
0 & 0 & 0 & \alpha & \\
1-\gamma & 1-\delta & 1-\epsilon & \beta & \\
\end{array}
\!\right\}\\
=
\left\{\!
\begin{array}{cccc|c}
0 & 1 & a & \infty & a/x \\
\hline
0 & 0 & 0 & \alpha & \\
\beta-\alpha & 1-\epsilon & 1-\delta & \alpha-\gamma+1 & \\
\end{array}
\!\right\}.
\end{multline}
That is, if $F=v(x)$ is a solution of the HE~(\ref{eq:HE}), then $x^\alpha
v(x),$ regarded as a function of $\bar x=a/x,$ i.e., as a multiple of
${\bar x}^{-\alpha} v(a/\bar x),$ will satisfy a~HE of the form
\begin{equation}
\label{eq:HeunHE}
\left\{D_{\bar x}^2 +
\left[ \frac{\bar\gamma}{\bar x} + \frac{\bar\delta}{\bar x-1} +
  \frac{\bar\epsilon}{\bar x-a} \right]D_{\bar x} +
\left[\frac{\bar\alpha\bar\beta(\bar x - \bar Q)}{\bar x(\bar x-1)(\bar x-a)}\right]\right\}F = 0,
\end{equation}
with parameters $\bar\alpha=\alpha,$ $\bar\beta=\alpha-\gamma+1,$
$\bar\gamma=\alpha-\beta+1,$ $\bar\delta=\epsilon,$ $\bar\epsilon=\delta$.
By direct computation, $\bar Q=\left[\beta
Q+(\epsilon-\beta)a+(\delta-\beta)\right]/(\alpha-\gamma+1)$.

In computing the M\"{o}bius transformations of~$\Hl,$ $\bar x=a/x$ is a
more natural independent variable than~$x,$ and $\bar Q$~a more natural
parameter than~$Q$ or~$q$.  This is because in~terms of~$\bar x,$ each of
the six homographies is {\em affine linear\/}, since it fixes the point
$\bar x=\infty,$ i.e., $x=0$.  From each homography $x'= M(x),$ the affine
map $\bar x'= \bar M(\bar x)$ follows from $\bar x'=\bar M(\bar x)= a'/x' =
a'/M(a/\bar x)$.  The correspondence is
\begin{displaymath}
  \begin{array}{rcccccc}
    M(x): & x, & \frac{x}{x-1}, & \frac{(1-a)x}{x-a}, & \frac{(a-1)x}{a(x-1)}, & \frac{x}{a}, & \frac{x}{x-a}\\
    \bar M(\bar x): & \bar x, & \frac{a-\bar x}{a-1}, & 1-\bar x, &
    \frac{a-\bar x}{a}, & \frac{\bar x}{a}, & \frac{1-\bar x}{1-a}.
\end{array}
\end{displaymath}
And from~(\ref{eq:HeunHE}), under any of these affine changes of variable
$\bar x'=\bar M(\bar x),$ it is clearly the case that $\bar Q'=\bar M(\bar
Q)$.  So, $\bar Q'$ in~terms of~$\bar Q$ is quite easy to calculate.

The homography $M(x)=\frac{x}{x-1},$ which comes from the permutation
$(1\infty)(a)$ of the nonzero singular points $x=1,a,\infty,$ is typical.
For this homography, Eq.~(\ref{eq:masterM}) says that in a neighborhood
of~$x=0,$ $\Hl(a,q;\alpha,\beta;\gamma,\delta;x)$ can equivalently be
written as
$(1-x)^{-\alpha}\Hl(a',q';\alpha',\beta';\gamma',\delta';\frac{x}{x-1}),$
in which $a'=\frac{a}{a-1},$ $\alpha'=\alpha,$ $\beta'=\alpha-\delta+1,$
$\gamma'=\gamma,$ and $\delta'=\alpha-\beta+1,$ as is easily verified.
What remains to be determined is the transformed parameter~$q'$.  But $\bar
Q'=(a-\bar Q')/(a-1),$ since the affine linear map associated to~$M$ is
$\bar M(\bar x)=\frac{a-\bar x}{a-1}$.  Taking into account that
\begin{align*}
\bar Q&=\left[\beta Q+(\epsilon-\beta)a+(\delta-\beta)\right]/(\alpha-\gamma+1),  \\
\bar Q'&=\left[\beta' Q'+(\epsilon'-\beta')a'+(\delta'-\beta')\right]/(\alpha'-\gamma'+1),
\end{align*}
and that $q=\alpha\beta Q$ and $q'=\alpha'\beta' Q',$ one finds
$q'=(-q+\gamma\alpha a)/(a-1)$.

The preceding procedure can be carried~out for all six homographies.  It
yields the following bijection between (positively signed) permutations of
the nonzero singular points $x=1,a,\infty$ of the~HE, and M\"{o}bius
transformations of~$\Hl$.
\begin{eqnarray*}
{}[1_+][a_+][\infty_+] & \ \sim\  &\Hl(a,q;\,\alpha,\beta;\,\gamma,\delta;\,x),\vphantom{\tfrac{(1-a) x}{x-a}}\\
{}[1_+\infty_+][a_+] & \ \sim\  &{(1-x)}^{-\alpha}\Hl(\tfrac{a}{a-1},\tfrac{-q+\gamma \alpha a}{a-1};\,\alpha,\alpha-\delta+1;\,\gamma,\alpha-\beta+1;\,\tfrac{x}{x-1}),\vphantom{\tfrac{(1-a) x}{x-a}}\\
{}[1_+][a_+\infty_+] & \ \sim\  &{(1-\tfrac{x}{a})}^{-\alpha}\Hl({1-a},{-q+\gamma \alpha};\,\alpha,-\beta+\gamma+\delta;\,\gamma,\delta;\,\tfrac{(1-a) x}{x-a}),\vphantom{\tfrac{(1-a) x}{x-a}}\\
{}[1_+a_+][\infty_+] & \ \sim\  &\Hl(\tfrac{1}{a},\tfrac{q}{a};\,\alpha,\beta;\,\gamma,\alpha+\beta-\gamma-\delta+1;\,\tfrac{x}{a}),\vphantom{\tfrac{(1-a) x}{x-a}}\\
{}[1_+a_+\infty_+] & \ \sim\  &{(1-\tfrac{x}{a})}^{-\alpha}\Hl(\tfrac{1}{1-a},\tfrac{q-\gamma \alpha}{a-1};\,\alpha,-\beta+\gamma+\delta;\,\gamma,\alpha-\beta+1;\,\tfrac{x}{x-a}),\vphantom{\tfrac{(1-a) x}{x-a}}\\
{}[1_+\infty_+a_+] & \ \sim\  &{(1-x)}^{-\alpha}\Hl(\tfrac{a-1}{a},\tfrac{-q+\gamma \alpha a}{a};\vphantom{\tfrac{(1-a) x}{x-a}}\\
                   &          &\hskip1in \alpha,\alpha-\delta+1;\,\gamma,\alpha+\beta-\gamma-\delta+1;\,\tfrac{(a-1) x}{a (x-1)}).
\end{eqnarray*}
Here, the second transformation is the just-derived one.  It and the third
are Pfaff-type transformations of~$\Hl(a,q;\alpha,\beta;\gamma,\delta;x)$.
The fourth is a simple `$\delta\leftrightarrow\epsilon$' one, but the fifth
and sixth are quite novel, having no ${}_2F_1$ analogue.  They cyclically
permute the nonzero singular points, so they are not involutions: they are
of order~$3$.

The full transformation group of~$\Hl$ is isomorphic to~$\mathcal{D}_3\cong
S_4,$ or to~$\mathcal{B}_3\cong\mathbb{Z}_2\times S_4$ if the
$\alpha\leftrightarrow\beta$ interchange is included.  It is generated by
the M\"{o}bius transformations, and by F-homotopic ones of the
type~(\ref{eq:nontrivial}), which are easier to work~out explicitly
(details are left to the reader).  The simplest such is
\begin{displaymath}
  [1_-][a_+][\infty_-]\quad\sim\quad (1-x)^{1-\delta}\,\Hl
\bigl(a,q-(\delta-1)\gamma a;\, \beta-\delta+1,\alpha-\delta+1; \,
\gamma,2-\delta; \, x\bigr),
\end{displaymath}
coming from an F-homotopy at~$x=1,$ which is analogous
to~(\ref{eq:Euler2}), Euler's transformation of~${}_2F_1$.  An explicit
bijection between the order-$24$ group $\mathcal{D}_3$ and the
transformations of $\Hl(a,q;\alpha,\beta;\gamma,\delta;x)$ is given
in~\cite[Table~2]{Maier10}.  The $24$ transformations were obtained by
machine computation.  But as the present treatment makes clear, they can
also be obtained reliably by hand.

The automorphism group of the HE is isomorphic to~$\mathcal{D}_4$ (of
order~$192$), or to~$\mathcal{B}_4$ (of order~$384$), if
$\alpha\leftrightarrow\beta$ is included.  The entire set of $192$ HE
solutions appears in Ref.~\cite{Maier10}, and the bijection
to~$\mathcal{D}_4$ is given explictly.  The $192$ are partitioned into
$8$~equivalence classes, one of which comprises the just-mentioned
$24$~transformed versions of~$\Hl$.  The remaining~$7$ come from the cosets
of~$\mathcal{D}_3$ in~$\mathcal{D}_4$.  For later use, one of the
$168=192-24$ solutions of the latter sort will be given here; namely,
\begin{equation}
\label{eq:asoln}
[a_+0_+1_+][\infty_+]\quad\sim\quad\Hl(\tfrac{a-1}a,\tfrac{-q+\beta\alpha a}{a};\,\alpha,\beta;\,\alpha+\beta-\gamma-\delta+1,\gamma;\,\tfrac{a-x}{a}).
\end{equation}
This is obtained from a M\"obius lifting
$M\colon\mathbb{P}_x^1\to\mathbb{P}_{x'}^1,$ where $M(x)\defeq
\frac{a-x}{a}$.  Since $M^{-1}(0)=a\neq0,$ it is not a transformed version
of~$\Hl$: it is a zero-exponent solution of the HE near the singular point
$x=a$.  It and $\Hl(a,q;\alpha,\beta;\gamma,\delta;x)$ do~not equal each
other on their common domain of definition, if this domain is nonempty.
\end{myexample*}

\section{Quadratic and quartic transformations of~$\Hl$}
\label{sec:quadratics}

The transformations of~$\Hl$ derived in~\S\,\ref{sec:autos} were based on
M\"{o}bius transformations of~$\mathbb{P}^1,$ i.e., on degree-$1$ covering
maps $M\colon\mathbb{P}^1_x\to \mathbb{P}^1_{x'}$.  But if
$R\colon\mathbb{P}^1_x\to \mathbb{P}^1_{x'}$ is any rational map, a~HE
$\mathcal{L}u=0$ on the Riemann sphere~$\mathbb{P}^1_{x'}$ will lift
along~$R$ to an FDE $(R^*\mathcal{L})u=0$ on~$\mathbb{P}_x^1$.  The
characteristic exponents of the operator~$R^*\mathcal{L}$ are determined by
Prop.~\ref{prop:lifting}.  If $R^*\mathcal{L}$ has only four singular
points, the lifted FDE will be a~HE in its own right.  Such a lifting is
said to be a degree-$(\deg R)$ transformation of the~HE\null.  If,
moreover, $R(0)=0,$ then a formula relating a local solution (at~$x=0$) of
the form $\Hl(x),$ to one of the form $\Hl(x')=\Hl(R(x)),$ will result.

Erd\'elyi~\cite[\S15.3]{Erdelyi53} pointed~out the existence of quadratic
and quartic transformations of the~HE, though he did not derive the
corresponding functional equations satisfied by the special function~$\Hl$.
He~showed that if any two of the singular points $x'=0,1,a',\infty$ of the
HE on~$\mathbb{P}_{x'}^1$ have exponent difference~$\frac12,$ then there is
a quadratic lifting to a~HE on~$\mathbb{P}_x^1$.  (The singular points
$x=0,1,a,\infty$ of the lifted~HE may differ, since $a'\neq a$ in general.)
For instance, one has
\begin{multline}
\label{eq:erdelyiP}
(1-x)^{\alpha}\left\{\!
\begin{array}{cccc|c}
0 & 1 & a & \infty & x \\
\hline
0 & 0 & 0 & 2\alpha & \\
1-\gamma & \gamma-2\alpha & 1-\gamma & \gamma & \\
\end{array}
\!\right\}\\
=
\left\{\!
\begin{array}{cccc|c}
0 & 1 & a' & \infty & R(x) \\
\hline
0 & 0 & 0 & \alpha & \\
1-\gamma & \frac12 & \frac12 & \gamma-\alpha & \\
\end{array}
\!\right\},
\end{multline}
in which the map $x'=R(x)$ is of the form
\begin{equation}
\label{eq:Rdef}
  R(x) = A\,\frac{x(a-x)}{1-x},
\end{equation}
provided that $a,a'\in\mathbb{P}^1\setminus\{0,1,\infty\}$ are related by
\begin{equation}
\label{eq:modular}
  a^2(1-a')^2 - 16(1-a)a'=0,
\end{equation}
and the `multiplier'~$A=A(a,a')$ is given by the formula
\begin{equation}
\label{eq:Adef}
  A = A(a,a') = \frac{1+a'}{2(2-a)}.
\end{equation}
The P-symbol identity~(\ref{eq:erdelyiP}) follows from
Prop.~\ref{prop:lifting}, since $R^{-1}(0)=\{0,a\}$ and
$R^{-1}(\infty)=\{1,\infty\}$; and as one can check, $x'=1,a'$ are critical
values of the map $x'=R(x),$ the corresponding critical points being mapped
doubly to~them.  The latter requirement, which causes the singular points
$x'=1,a'$ to `disappear' when lifted to~$\mathbb{P}_x^1,$ is the source of
the quadratic constraint~(\ref{eq:modular}) and the
formula~(\ref{eq:Adef}).

The Riemann surface defined by~(\ref{eq:modular}) is of genus zero.
Equivalently, $a,a'$ can be expressed in~terms of a complex parameter~$t$.
By examination, one can choose
\begin{equation}
  a(t)=\frac{t(t+8)}{(t+4)^2},\qquad a'(t)=\frac{t^2}{(t+8)^2};\qquad
  A(t)=\left(\frac{t+4}{t+8}\right)^2.
\end{equation}
The choices $t=0,-4,-8,\infty$ are `unphysical', since each corresponds to
one of~$a,a'$ equaling one of~$0,1,\infty$.  So, the possible quadratic
Heun liftings of the form~(\ref{eq:erdelyiP}) are parametrized by the
quadruply punctured sphere $\mathbb{P}^1_t\setminus\{0,-4,-8,\infty\}$.

The quadratic (and biquadratic quartic) transformations of~$\Hl$ can now be
determined.  To avoid degenerate cases, it will be assumed that the
underlying rational map $R\colon\mathbb{P}^1_x\to\mathbb{P}_{x'}^1$ has the
property that each critical value, on~$\mathbb{P}_{x'}^1,$ is one of the HE
singular points $x'=0,1,a,\infty$.

\begin{mytheorem}
\label{thm:quadratic}
  There is a \emph{unique} quadratic transformation of~$\Hl,$ up~to pre-
  and post-composition with M\"{o}bius and F-homotopic transformations
  of~$\Hl$.  It~is
  \begin{align*}
    &\Hl(a,q;\,2\alpha,\gamma;\,\gamma,2\alpha-\gamma+1;\,x)\\
    &\qquad = (1-x)^{-\alpha}
    \Hl\bigl(a',A(q-\gamma\alpha a);\,\alpha,\gamma-\alpha;\,\gamma,\tfrac12;\,R(x)\bigr),
  \end{align*}
where $a,a'$ are related by\/~{\rm(\ref{eq:modular}),} and the quadratic
map~$R$ and multiplier~$A$ are defined
by\/~{\rm(\ref{eq:Rdef}),(\ref{eq:Adef}).}  This equality holds on a
neighborhood of~$x=0$.
\end{mytheorem}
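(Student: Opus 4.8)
The plan is to establish the claimed quadratic transformation by translating both sides into their P-symbols and applying the lifting machinery of Proposition~\ref{prop:lifting} together with the Erd\'elyi-type identity~(\ref{eq:erdelyiP}). First I would verify that the left-hand function $\Hl(a,q;2\alpha,\gamma;\gamma,2\alpha-\gamma+1;x)$ is indeed the zero-exponent local solution at $x=0$ of the HE whose P-symbol is the left-hand side of~(\ref{eq:erdelyiP}) after the F-homotopy $(1-x)^{\alpha}$ is absorbed. With the substitutions $\alpha\to2\alpha$, $\beta\to\gamma$, $\gamma\to\gamma$, $\delta\to2\alpha-\gamma+1$, Fuchs's relation forces $\epsilon=\alpha+\beta-\gamma-\delta+1=\gamma-2\alpha$ at the singular point $x=1$, so the exponents at $x=1$ become $0,\gamma-2\alpha$ and at $x=\infty$ they are $2\alpha,\gamma$; multiplying by $(1-x)^{\alpha}$ then shifts these to match the left P-symbol in~(\ref{eq:erdelyiP}) exactly. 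This identifies the left side of the theorem with $(1-x)^{-\alpha}$ times the zero-exponent solution of the right-hand HE in~(\ref{eq:erdelyiP}).

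Next I would read off, from the right-hand P-symbol of~(\ref{eq:erdelyiP}), which Heun function its zero-exponent solution at $x=0$ represents. The exponents there are $0,1-\gamma$ at $x=0$; $0,\tfrac12$ at $x=1$; $0,\tfrac12$ at $x=a'$; and $\alpha,\gamma-\alpha$ at $x=\infty$. Matching against the standard HE P-symbol~(\ref{eq:Psymbols}) gives transformed parameters $\alpha'=\alpha$, $\beta'=\gamma-\alpha$, $\gamma'=\gamma$, $\delta'=\tfrac12$, with $\epsilon'=\tfrac12$ confirmed by Fuchs's relation. Since $R(0)=0$ by~(\ref{eq:Rdef}), the right-hand solution is a local Heun function in the variable $x'=R(x)$, namely $\Hl(a',q';\alpha,\gamma-\alpha;\gamma,\tfrac12;R(x))$ for an accessory parameter $q'$ still to be pinned down. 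Normalizing both sides to unity at $x=0$ (using $R(0)=0$ and $(1-0)^{-\alpha}=1$) fixes the proportionality constant to $1$, so the functional form of the claimed identity is established modulo the value of $q'$.

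The main obstacle, as the paper itself signals in the discussion preceding the theorem, is computing the transformed accessory parameter $q'=A(q-\gamma\alpha a)$; P-symbols carry exponent data but not accessory parameters, so this step must be done by hand from the differential equations. My approach would be to express both the lifted operator $R^*\mathcal{L}$ and the target HE in the form~(\ref{eq:HE}), substitute $x'=R(x)=A\,x(a-x)/(1-x)$ together with the F-homotopy factor $(1-x)^{\alpha}$, and compare the coefficient of the accessory term. A cleaner route is to exploit the $\bar x=a/x$ reparametrization and the parameter $\bar Q=[\beta Q+(\epsilon-\beta)a+(\delta-\beta)]/(\alpha-\gamma+1)$ introduced in the $n=4$ example: since the modular relation~(\ref{eq:modular}) and multiplier~(\ref{eq:Adef}) already encode how the critical points collapse, one can match the $q$-linear part of the recurrence~(\ref{eq:HErec}) on each side. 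I expect that tracking how the F-homotopy prefactor $(1-x)^{\alpha}$ shifts the accessory term, combined with the multiplier $A$ scaling $x'\sim Ax$ near $x=0$, will produce precisely the linear expression $q'=A(q-\gamma\alpha a)$; verifying this is the genuinely computational heart of the proof.

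Finally, I would address uniqueness up to pre- and post-composition with M\"obius and F-homotopic transformations. Here I would invoke the constraint, stated just before the theorem, that every critical value of $R$ must be one of the HE singular points $x'=0,1,a,\infty$, and the requirement $R(0)=0$. Since a degree-$2$ rational map has two critical points (hence two critical values), and Proposition~\ref{prop:lifting}(ii) forces each collapsing singular point to carry exponent difference $\tfrac12$, the combinatorial possibilities for how $R^{-1}\{0,1,a',\infty\}$ distributes among four lifted singular points are finite. Up to the action of the transformation group $\mathcal{D}_3$ on source and target (which is exactly pre- and post-composition with the M\"obius and F-homotopic transformations catalogued in~\S\,\ref{sec:autos}), all such configurations are equivalent to the one realized by~(\ref{eq:erdelyiP}); this reduces the uniqueness claim to checking that no genuinely distinct branching pattern survives, which I would verify by a short case analysis on the ramification data.
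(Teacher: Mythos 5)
Your proposal is correct and takes essentially the same route as the paper's proof: you read the functional identity off the Erd\'elyi lifting~(\ref{eq:erdelyiP}), defer the accessory parameter $q'=A(q-\gamma\alpha a)$ to a direct substitution of $x'=R(x)$ into the HE (exactly what the paper does), and obtain uniqueness from the ramification constraints of a degree-$2$ map---two critical points, each collapsed singular point forced to have exponent difference $\tfrac12$ by Prop.~\ref{prop:lifting}(ii), with all slot assignments absorbed by M\"obius and F-homotopic pre/post-composition---which is precisely the paper's annotated-schema argument $\underline1+\underline1=\underline1+\underline1=2=2$. One harmless slip: with the theorem's parameters Fuchs's relation gives $\epsilon=\gamma$ (so exponents $0,1-\gamma$ at $x=a$), while the exponents $0,\gamma-2\alpha$ at $x=1$ come directly from $1-\delta$ rather than from $\epsilon$; this does not affect your verification, since the P-symbols match~(\ref{eq:erdelyiP}) exactly as you conclude.
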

\begin{proof}
  Any quadratic map~$R$ must have branching schema $1+1=1+1=2=2$.
  (A~schema of this sort, with four `slots', lists the multiplicities with
  which the inverse images of the four singular points $x=0,1,a',\infty$
  on~$\mathbb{P}_{x'}^1$ are mapped to them; orderings of slots and inverse
  images are not significant.)  In~general, the lifted HE will have six
  singular points.  The number will be reduced to four, by
  Prop.~\ref{prop:lifting}, if each of the two singular points
  on~$\mathbb{P}_{x'}^1$ listed last has exponent difference~$\frac12$.

  Let the schema be annotated by underscoring each inverse image that is a
  singular point of the operator~$R^*\mathcal{L}$ on~$\mathbb{P}^1_x$.
  Then there is a unique annotated schema corresponding to a lifting of an
  HE to a~HE, namely
  $\underline1+\underline1=\underline1+\underline1=2=2$.  For this to yield
  a transformation of~$\Hl,$ one must have $R(0)=0$.  So $x'=0$ must be
  assigned to one of the first two slots.  The remaining singular points
  $x'=1,a',\infty$ can be assigned arbitrarily to the remaining three
  slots; and the singular points $x=1,a,\infty$ to the remaining inverse
  images of type~$\underline1$.  Any such arrangment can be reduced by pre-
  and post-composition with M\"obius transformations to the
  scheme~(\ref{eq:erdelyiP}), except that F-homotopic transformations may
  also be applied.

  The $\Hl$ identity of the theorem can be read~off
  from~(\ref{eq:erdelyiP}), except for the value $q'=A(q-\gamma\alpha a)$
  of the right-hand accessory parameter, which follows by direct
  computation: applying the change of variable $x'=R(x)$ to the~HE
  $\mathcal{L}u=0$.
\end{proof}
\begin{myremarkaftertheorem}
  The full set of quadratic transformations of~$\Hl$ is obtained by
  (i)~composing with M\"{o}bius maps, i.e., using $M_2\circ R\circ M_1$
  rather than~$R$ as the covering map (the left side of the identity being
  transformed by~${M_1}^{-1},$ and the right by~$M_2$), and (ii)~acting on
  either side by F-homotopies.  Quadratic transformations are therefore
  bijective to the elements of $\mathcal{D}_3\times\mathcal{D}_3,$ modulo
  an appropriate stabilizing subgroup.  (Here
  `$\alpha\leftrightarrow\beta$' is disregarded.)  A~complete list will
  appear elsewhere.
\end{myremarkaftertheorem}
\begin{myremarkaftertheorem}
  To place Theorem~\ref{thm:quadratic} in context, there is no~analogous
  uniqueness result for the quadratic transformations of the Gauss
  hypergeometric function~${}_2F_1$.  There is a unique annotated schema
  for a quadratic lifting of a GHE on~$\mathbb{P}_{x'}^1$ to a GHE
  on~$\mathbb{P}_x^1$; namely, $\underline1+\underline1=\underline2=2$.
  But there are two ways of assigning $x'=0$: it~can be assigned to the
  first slot or the second, and the resulting quadratic maps $R(x)$ are
  distinct (up~to pre- and post-composition with M\"obius transformations,
  i.e., with Pfaff's involution $x\mapsto x/(x-1)$).  The two choices yield
  \begin{displaymath}
    \begin{array}{ccccc}
      R(x)\quad =& 4x(1-x), & \frac{-4x}{(1-x)^2},  & \frac{4x}{(1+x)^2}, &  \frac{4x(1-x)}{(1-2x)^2} \,;\\
      R(x)\quad =& \frac{x^2}{(2-x)^2}, & \frac{-x^2}{4(1-x)}, &  &  \\
    \end{array}
  \end{displaymath}
  respectively.  Taking pre- and post-composition with F-homotopies (i.e.,
  with Euler's transformation) into account, one sees that the
  transformations coming from the first alternative are bijective to the
  elements of the group~$\mathcal{D}_2\times\mathcal{D}_2,$ and those from
  the second are bijective to the same group, modulo a stabilizing subgroup
  of order~$2$.  A~partial table of quadratic transformations of~${}_2F_1$
  is given in~\cite[\S2.11]{Erdelyi53}.  The existence of two distinct
  types has been stressed by Askey~\cite{Askey94}.
\end{myremarkaftertheorem}

Erd\'elyi also pointed~out that if \emph{three} of the exponent-differences
of the HE on~$\mathbb{P}_{x'}^1$ are equal to~$\frac12,$ such as those
at~$x'=1,a',\infty,$ then it can be lifted to a~HE along three distinct
degree-$2$ covering maps $x'=R(x),$ and each can be followed by a second
degree-$2$ lifting, leading to a \emph{biquadratic quartic transformation}.
By examination, any such composition of two quadratic maps, $x'=S(x),$ has
branching schema
$\underline1+\underline1+\underline1+\underline1=2+2=2+2=2+2$.  This schema
is exemplified by
\begin{equation}
\label{eq:quarticlift}
  S(x)=\frac{4a\,x(1-x)(a-x)}{(a-x^2)^2}
  =1-\frac{(a-2ax+x^2)^2}{(a-x^2)^2} = a-\frac{a(a-2x+x^2)^2}{(a-x^2)^2},
\end{equation}
for which $a=a'$; for this map, $S^{-1}(0)=\{0,1,a,\infty\}$.  The schema
is so symmetrical that when deriving quartic transformations of~$\Hl$
from~it, there is essentially only one way of assigning $x'=0,1,a',\infty$
to the slots, and $x=0,1,a,\infty$ to the inverse images of
type~$\underline1$.  Proceeding as above, and computing the accessory
parameter~$q$ of the lifted~HE from~$q'$ by direct computation, one obtains
the following.

\begin{mytheorem}
\label{thm:biquadratic}
  There is a \emph{unique} biquadratic quartic transformation of~$\Hl,$
  up~to pre- and post-composition with M\"{o}bius and F-homotopic
  transformations of~$\Hl$.  Valid on a neighborhood of~$x=0,$ it is
  \begin{align*}
    &\Hl(a,q;\,2\gamma-1,\gamma;\,\gamma,\gamma;\,x)\\
    &\qquad = (1-x^2/a)^{-\gamma+\frac12}
    \Hl\bigl(a,q/4;\,\tfrac{\gamma}2-\tfrac14,\tfrac{\gamma}2+\tfrac14;\,\gamma,\tfrac12;\,S(x)\bigr),
  \end{align*}
where the biquadratic quartic map~$S$ is defined
by\/~{\rm(\ref{eq:quarticlift}).}
\end{mytheorem}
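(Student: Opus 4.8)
The plan is to mirror the proof of Theorem~\ref{thm:quadratic}, taking the right-hand function $\Hl(a,q';\,\tfrac\gamma2-\tfrac14,\tfrac\gamma2+\tfrac14;\,\gamma,\tfrac12;\,x')$ as the local solution at $x'=0$ of the base HE $\mathcal{L}u=0$ on~$\mathbb{P}^1_{x'}$, and lifting along $x'=S(x)$. A direct check shows that this base HE has exponent difference~$\tfrac12$ at each of $x'=1,a',\infty$ (with $a'=a$) and $1-\gamma$ at~$x'=0$, so its P-symbol is adapted to the schema $\underline1+\underline1+\underline1+\underline1=2+2=2+2=2+2$ of~(\ref{eq:quarticlift}). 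First I would record, from $S^{-1}(0)=\{0,1,a,\infty\}$, that the four simple preimages of $x'=0$ fill the first slot and become genuine singular points, while the three doubly covered slots are assigned to $x'=1,a',\infty$. By Prop.~\ref{prop:lifting}(ii), the double preimages of $x'=1$ and~$x'=a'$ are ordinary and hence disappear, since the base exponent difference there is~$\tfrac12$.

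The one delicate point in the exponent bookkeeping is the pair $x=\pm\sqrt a=S^{-1}(\infty)$. Here Prop.~\ref{prop:lifting}(ii) does not apply directly, since the base exponents at $x'=\infty$ are $\tfrac\gamma2\mp\tfrac14$ rather than $0,\tfrac12$; by part~(i) they lift to $\gamma-\tfrac12,\gamma+\tfrac12$, so $x=\pm\sqrt a$ are at this stage singular and $S^*\mathcal{L}$ has six singular points. I would then apply the F-homotopy supplied by the prefactor $(1-x^2/a)^{-\gamma+\frac12}$, which vanishes simply at each of $\pm\sqrt a$ and so decrements the exponents there by $\gamma-\tfrac12$, returning them to $0,1$; this renders $x=\pm\sqrt a$ ordinary and leaves exactly the four singular points $x=0,1,a,\infty$, so the normalized FDE is a genuine~HE. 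Reading off its P-symbol --- the prefactor is analytic and nonzero at $x=0,1,a$, so the exponents $0,1-\gamma$ survive there, while at $x=\infty$ the two finite zeros together increment the exponents by $2(\gamma-\tfrac12)=2\gamma-1$, carrying $0,1-\gamma$ into $2\gamma-1,\gamma$ --- recovers precisely the parameters of the left-hand function $\Hl(a,q;\,2\gamma-1,\gamma;\,\gamma,\gamma;\,x)$.

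The main obstacle, exactly as in Theorem~\ref{thm:quadratic}, is the accessory parameter: one must show $q=4q'$, i.e.\ $q'=q/4$. This does not follow from the P-symbol alone and requires substituting $x'=S(x)$ into the base HE and matching the coefficient of the accessory term. The computation is heavy because $S$ has degree~$4$, but the simplicity of the answer should be explained by the symmetry $S(a/x)=S(x)$: the map factors through the involution $x\mapsto a/x$ (which swaps $0\leftrightarrow\infty$ and $1\leftrightarrow a$ and fixes $\pm\sqrt a$), forcing the inhomogeneous contributions that produced the shift term in Theorem~\ref{thm:quadratic} to cancel in conjugate pairs and leaving only the local multiplier at the origin, where $S(x)=4x+O(x^2)$. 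Alternatively one can exhibit $S$ as a composite of two quadratic maps and apply Theorem~\ref{thm:quadratic} twice, tracking~$q$ through the intermediate HE; I expect the symmetry route to be cleaner.

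Finally, uniqueness is inherited from the rigidity of the annotated schema, just as in Theorem~\ref{thm:quadratic}. The requirement $S(0)=0$ forces $x'=0$ into the $\underline1+\underline1+\underline1+\underline1$ slot; the three $2+2$ slots must carry the three points of exponent difference~$\tfrac12$, namely $x'=1,a',\infty$; and all freedom in distributing $x'=1,a',\infty$ among those slots, and $x=0,1,a,\infty$ among the four simple preimages, is absorbed by post- and pre-composition with M\"obius transformations, together with F-homotopies. Hence the transformation is unique up to the stated operations.
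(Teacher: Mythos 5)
Your proposal is correct and takes essentially the same approach as the paper: the paper likewise derives the theorem by lifting along the schema $\underline1+\underline1+\underline1+\underline1=2+2=2+2=2+2$ of~$S$, removing the doubly covered points of exponent difference~$\tfrac12$ via Prop.~\ref{prop:lifting}, handling $x=\pm\sqrt a$ with the F-homotopy prefactor, obtaining $q'=q/4$ by direct computation, and getting uniqueness from the rigidity of the annotated schema, i.e., by ``proceeding as above'' from the proof of Theorem~\ref{thm:quadratic}. Your extra observations (the symmetry $S(a/x)=S(x)$ and the option of applying Theorem~\ref{thm:quadratic} twice) are sound but go beyond what the paper records, which, like you, leaves the accessory-parameter verification as a direct substitution.
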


If $\gamma=1/2,$ then all four of the exponent-differences of the HE are
equal to~$\frac12$.  In this quite degenerate case, the biquadratic
$\Hl$~identity specializes as~follows.

\begin{mycorollary}
The special function $H(a,q;x)\defeq \Hl(a,q;0,\frac12;\frac12,\frac12;x),$
where $a\in\mathbb{C}\setminus\{0,1\}$ and~$q\in\mathbb{C},$ which is
defined on~$\left|x\right|<\min(1,\left|a\right|),$ satisfies the
functional equation $H(a,q;x)=H(a,q/4;S(x))$ on a neighborhood of~$x=0$.
\end{mycorollary}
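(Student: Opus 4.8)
The plan is to specialize Theorem~\ref{thm:biquadratic} to the parameter value $\gamma=\tfrac12$ and verify that the resulting identity is precisely the claimed functional equation. First I would set $\gamma=\tfrac12$ in the left-hand Heun function of the theorem. The exponent parameters become $2\gamma-1=0$ and $\gamma=\tfrac12$ in the $\alpha,\beta$ slots, and $\gamma,\gamma=\tfrac12,\tfrac12$ in the $\gamma,\delta$ slots, so that the left-hand side reads $\Hl(a,q;\,0,\tfrac12;\,\tfrac12,\tfrac12;\,x)$, which is exactly the definition of $H(a,q;x)$ as given in the corollary statement.

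Next I would examine the right-hand side under the same specialization. The prefactor $(1-x^2/a)^{-\gamma+1/2}$ becomes $(1-x^2/a)^{0}=1$, so it disappears entirely. The transformed exponent parameters become $\tfrac{\gamma}2-\tfrac14=0$ and $\tfrac{\gamma}2+\tfrac14=\tfrac12$ in the first pair, and $\gamma,\tfrac12=\tfrac12,\tfrac12$ in the second pair, while the singular point $a$ and the map $S(x)$ are unchanged. Hence the right-hand side collapses to $\Hl(a,q/4;\,0,\tfrac12;\,\tfrac12,\tfrac12;\,S(x))=H(a,q/4;S(x))$. Putting the two sides together yields the asserted equation $H(a,q;x)=H(a,q/4;S(x))$.

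The remaining points are bookkeeping rather than obstacles. I would note that the hypotheses $a\in\mathbb{C}\setminus\{0,1\}$ and the convergence domain $\lvert x\rvert<\min(1,\lvert a\rvert)$ are inherited directly from the definition of $\Hl$ recorded in the introduction, and that the validity on a neighborhood of $x=0$ is exactly the domain of validity asserted in Theorem~\ref{thm:biquadratic}, together with the fact that $S(0)=0$ (visible from~(\ref{eq:quarticlift}), since $x=0$ is an inverse image of~$0$). The only mild subtlety worth a sentence is that this is genuinely the \emph{fully degenerate} case where all four exponent differences equal $\tfrac12$, so one should check that $H(a,q;x)$ is still well defined, i.e.\ that $\gamma=\tfrac12$ is not a nonpositive integer and the local Frobenius solution at $x=0$ is analytic; this holds trivially. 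The main (and essentially only) content is the verification that the prefactor degenerates to unity and that the specialized exponent data on both sides match the defining parameters of~$H$, which is immediate substitution.
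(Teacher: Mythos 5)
Your proposal is correct and coincides with the paper's own (implicit) argument: the paper presents this corollary as the immediate specialization of Theorem~3.3 (the biquadratic quartic transformation) at $\gamma=\tfrac12$, where the prefactor $(1-x^2/a)^{-\gamma+1/2}$ degenerates to unity and both parameter lists collapse to $(0,\tfrac12;\tfrac12,\tfrac12)$, exactly as you verify. Your added bookkeeping ($S(0)=0$, analyticity at $x=0$ since $\gamma=\tfrac12$ is not a nonpositive integer) is harmless and consistent with the paper.
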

\begin{myremark*}
  By clever changes of variable, due to L.~Carlitz (see
Valent~\cite{Valent86}), one can show
$H(a,q;x)=\cosh\left[2\sqrt{q/a}\,\,{\rm sn}^{-1}(\sqrt{x}\,)\right],$
where ${\rm sn}^{-1}$~is the inverse of the Jacobi elliptic function~${\rm
sn}(\cdot\,|\,1/a),$ and the branch that vanishes at $x=0$ is meant.  So
the corollary says that ${\rm sn}^{-1}(\sqrt{x}) = {\rm
sn}^{-1}\bigl(\sqrt{S(x)}\bigr)/2,$ i.e., that ${\rm sn}^2(2u\,|\,1/a) =
S\left({\rm sn}^2(u\,|\,1/a)\right)$.  That is, the $\gamma=1/2$ case of
the biquadratic $\Hl$ transformation is really the duplication formula for
the Jacobi function~\cite[\S13.17]{Erdelyi53}, in disguise.  That the
biquadratic quartic function~$S$ plays a dual role is not well~known.
\end{myremark*}

\section{From $\Hl$ to~${}_3F_2$}
\label{sec:Hlto3F2}

The local Heun function $\Hl(x)\defeq \sum_{n=0}^\infty c(n)x^n$ has been less
extensively investigated than the Gauss hypergeometric
function~${}_2F_1(x),$ or even the generalized hypergeometric functions
${}_3F_2(x),$ ${}_4F_3(x),$ etc.  The presence of a fourth singular point
$x=a$ in the HE~(\ref{eq:HE}), or equivalently, the fact that the
recurrence~(\ref{eq:HErec}) satisfied by the coefficient sequence
$c\colon\mathbb{N}\to\mathbb{C}$ is of second order rather than first, is a
novel and somewhat disconcerting feature.  It is natural to ask whether,
for certain parameter choices, $\Hl$~can be reduced to more familiar
transcendental functions.

That $\Hl$ may reduce to~${}_3F_2$ was shown by Letessier et
al.~\cite{Letessier94}.  In this section their result is rederived from
scratch, starting from the `$\Hl$ side' rather than the `${}_3F_2$ side,'
as it were.  A~connection to P-symbols will be made.  As usual, it will be
assumed that the parameter~$\gamma$ is not a nonpositive integer.

First, consider how the singular point $x=a$ of~(\ref{eq:HE}) can be
removed or otherwise tamed.  The simplest case is $\epsilon=0,$ when the
characteristic exponents of~$x=a$ are $0,1-\epsilon=1,$ which are those of
an ordinary point.  (The case $\epsilon=2,$ which is related to this one by
an F-homotopy, is left to the reader.)  Since the difference between the
two exponents is an integer, the local solution corresponding to the
smaller one, zero, will generically be logarithmic.  But a glance
at~(\ref{eq:HE}) reveals that if $q=\alpha\beta a$ when $\epsilon=0,$ then
the HE will reduce to the GHE\null.  Since
$\delta=\alpha+\beta-\gamma-\epsilon+1,$ one has the reduction
$\Hl(a,\alpha\beta
a;\alpha,\beta;\gamma,\alpha+\beta-\gamma+1;x)={}_2F_1(\alpha,\beta;\gamma;x)$.

The next simplest case is $\epsilon=-1,$ when the exponents of~$x=a$ are
$0,2$.  (The case $\epsilon=3,$ related to this one by an F-homotopy, is
also left to the reader.)  As before, the zero-exponent local solution of
the HE at~$x=a$ will generically be logarithmic.  If it is not, then by
definition $x=a$ will be a (simple) {\em apparent\/} singular point: all
local solutions of the HE at~$x=a$ will be analytic.

\begin{mytheorem}
\label{thm:family}
  At fixed $\alpha,\beta,\gamma\in\mathbb{C},$ the family of HEs with
  $\epsilon=-1$ {\rm(}and hence $\delta=\alpha+\beta-\gamma+2${\rm)}, the
  singular point at~$x=a$ being required to be apparent, can be
  parametrized by an auxiliary parameter $e\in\mathbb{C}$ according to
\begin{displaymath}
  a=\frac{e(e-\gamma+1)}{(e-\alpha)(e-\beta)},\qquad 
  q=\alpha\beta\,\frac{(e+1)(e-\gamma+1)}{(e-\alpha)(e-\beta)}.
\end{displaymath}
\end{mytheorem}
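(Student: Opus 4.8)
The plan is to impose directly the condition that $x=a$ is an apparent singular point when $\epsilon=-1$, and to read off the resulting constraint on the accessory parameter $q$ and the location $a$. With $\epsilon=-1$ the exponents at $x=a$ are $0,2$, so the zero-exponent Frobenius solution at $x=a$ is generically logarithmic; the point is apparent precisely when the logarithm is absent. First I would set up the local Frobenius expansion at $x=a$. Writing a local solution as $\sum_{m\ge0} d(m)(x-a)^m$ and substituting into the HE~(\ref{eq:HE}), the indicial equation gives exponents $0$ and $2$. The obstruction to an analytic zero-exponent solution appears at $m=2$ (the larger exponent): the recurrence coefficient multiplying $d(2)$ vanishes, so one obtains a single scalar \emph{compatibility condition} that the lower coefficients $d(0),d(1)$ must satisfy for no logarithmic term to be forced. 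This one equation, linear in $q$ once $a$ is given, is exactly the apparency condition.

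The key computational step is therefore to expand the coefficients of the HE about $x=a$ to low order and extract that compatibility condition explicitly. Concretely, I would rewrite the HE in the self-adjoint-free form $F'' + P(x)F' + Q(x)F=0$ with $P=\gamma/x+\delta/(x-1)+\epsilon/(x-a)$ and $Q=(\alpha\beta x-q)/[x(x-1)(x-a)]$, set $\epsilon=-1$, and Taylor-expand $P$ and $Q$ near $x=a$, keeping the simple-pole parts (from the $1/(x-a)$ terms) and the regular parts through the needed order. Feeding this into the indicial recurrence and demanding solvability at the resonant index yields one polynomial relation among $a$, $q$, and the fixed data $\alpha,\beta,\gamma$ (using $\delta=\alpha+\beta-\gamma+2$). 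I then want to verify that the claimed rational parametrization
\begin{displaymath}
  a=\frac{e(e-\gamma+1)}{(e-\alpha)(e-\beta)},\qquad
  q=\alpha\beta\,\frac{(e+1)(e-\gamma+1)}{(e-\alpha)(e-\beta)}
\end{displaymath}
satisfies this relation identically in $e$, and conversely that it sweeps out the whole solution curve.

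Rather than brute-force the messy resonance computation, I expect the cleaner route is to exploit the structure already visible in the answer. Note that $q/a = \alpha\beta\,(e+1)/e$, so the two formulas encode the single rational relation $q = \alpha\beta\,a\,(e+1)/e$ together with $a = e(e-\gamma+1)/[(e-\alpha)(e-\beta)]$; the auxiliary $e$ is nothing but the slope data of the apparency condition. A conceptually transparent way to pin down $e$ is to recognize that apparency at $x=a$ with exponents $0,2$ forces the HE to admit a polynomial-type solution factor or, equivalently, that the second-order difference operator in~(\ref{eq:HErec}) factors with a first-order right factor; the root of that factor supplies $e$. I would check consistency by confirming that the exponents match: the analytic solution at $x=a$ has the two exponents $0,2$, and $e$ is the index at which the coefficient recurrence~(\ref{eq:HErec}) degenerates.

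The main obstacle is the accessory-parameter bookkeeping: verifying that the apparency condition, once solved for $q$ at given $a$, is compatible with the simultaneous rational dependence of both $a$ and $q$ on the single parameter $e$. Equivalently, the curve of admissible $(a,q)$ pairs must be shown to be rational and to coincide with the stated parametrization, which requires confirming that eliminating $e$ reproduces exactly the resonance relation and introduces no spurious branches. I would handle this by substituting the parametrization into the explicit apparency equation and checking it reduces to $0=0$ after clearing the common denominator $(e-\alpha)(e-\beta)$, using $\delta=\alpha+\beta-\gamma+2$ throughout; the degrees match, so a finite identity check suffices. The genericity caveat (avoiding $\gamma$ a nonpositive integer, and $e\notin\{\alpha,\beta\}$ so that $a$ is finite and nonzero) should be noted to keep the local solution at $x=0$ analytic and the parametrization nondegenerate.
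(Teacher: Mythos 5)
Your primary route is essentially the paper's own: the paper's proof also begins by substituting a Frobenius series $\sum_n \tilde c(n)(x-a)^n$ into~(\ref{eq:HE}) and extracting the compatibility condition at the resonant index ($0\cdot\tilde c(2)=0$), then streamlines the computation by rewriting the zero-exponent solution at $x=a$ as the relabeled function~(\ref{eq:relabel}) coming from~(\ref{eq:asoln}), with $\gamma'=-1$, so that the non-logarithmic condition can be read off from the standard recurrence~(\ref{eq:HErec}); this yields (\ref{eq:qprime}) and hence the explicit curve~(\ref{eq:aqcurvedef}), after which the genus-zero curve is uniformized by~$e$ and the stated formulas are checked, just as you propose. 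But your write-up contains one genuine error: the apparency condition is \emph{not} linear in $q$ at fixed $a$; it is quadratic. In your own recurrence, the $m=1$ step gives $d(1)=q_{-1}\,d(0)$ with $q_{-1}=(\alpha\beta a-q)/[a(a-1)]$ linear in~$q$, and the $m=2$ compatibility condition is $q_{-1}^2+p_0q_{-1}+q_0=0$ (with $p_0,q_0$ the regular parts of the coefficient functions at $x=a$, and $q_0$ also linear in~$q$), so a $q^2$ term appears --- visibly present in~(\ref{eq:aqcurvedef}). This is not cosmetic: the admissible $(a,q)$ form a \emph{two-sheeted} cover of $\mathbb{P}^1_a$, and your ``no spurious branches'' check comes out right only because the two roots $e_1,e_2$ of the quadratic equation $a(e)=a$, namely $(a-1)e^2-(\alpha a+\beta a-\gamma+1)e+\alpha\beta a=0$, correspond exactly to the two admissible values of $q$ over a given~$a$. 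Were the condition linear in~$q$, your parametrization would be generically $2$-to-$1$ onto the curve and the consistency check would fail. The repair is latent in your own observation $q/a=\alpha\beta\,(e+1)/e$: it inverts to $e=\alpha\beta a/(q-\alpha\beta a)$, a rational inverse showing $e\mapsto(a(e),q(e))$ is birational onto the quadratic curve, which settles surjectivity.

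A secondary caution: your proposed ``cleaner route'' --- that apparency at $x=a$ is \emph{equivalent} to the existence of a first-order right factor of the difference operator in~(\ref{eq:HErec}), whose root supplies~$e$ --- cannot be assumed. The factorization~(\ref{eq:Efac}) concerns the coefficient recurrence at $x=0$, whereas apparency is a local condition at $x=a$; the paper establishes the factorization only \emph{after} the parametrization is in hand (Theorem~\ref{thm:latermain}), and remarks that characterizing when such operators factor is difficult in general. Likewise, apparency with exponents $0,2$ does not ``force a polynomial-type solution factor'' --- it only forces all local solutions at $x=a$ to be analytic there. Stick to your primary resonance computation, with the quadratic (not linear) condition in~$q$, and the argument matches the paper's.
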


\begin{proof}
The singular point $x=a$ will be apparent iff $q$~satisfies a certain
algebraic condition.  It can be worked~out by substituting
$F(x)=\sum_{n=0}^\infty \tilde c(n)(x-a)^n$ into~(\ref{eq:HE}), with
$\tilde c(0)=1,$ and solving for $\tilde c(1),\tilde c(2),$ etc.  In
calculating~$\tilde c(2)$ a division by zero will occur, unless
$q$~satisfies the condition, in which case the equation for~$\tilde c(2)$
will be $0\cdot \tilde c(2)=0$.  The condition is that the right side of
this equation equal zero.

More efficiently, one may exploit the explicit formula~(\ref{eq:asoln}) for
the zero-exponent local solution of the HE at~$x=a$.  When $\epsilon=-1,$
it reduces to
\begin{equation}
\label{eq:relabel}
  \Hl(a',q';\alpha',\beta';\gamma',\delta';x')
  \defeq  
  \Hl(\tfrac{a-1}a,\tfrac{-q+\beta\alpha a}{a};\,\alpha,\beta;\,-1,\gamma;\,\tfrac{a-x}{a}).
\end{equation}
Since $\gamma'=-1,$ this $\Hl$ has exponents $0,2$ at $x'=\frac{a-x}a=0,$
and is generically logarithmic there.  The condition on~$q'$ for
$\Hl(a',q';\alpha',\beta';-1,\delta';x')$ to be non-logarithmic follows
readily from the recurrence~(\ref{eq:HErec}).  One finds that the equation
determining~$c(2)$ is of the form $0\cdot c(2)=0,$ and therefore has a
solution, iff
\begin{equation}
\label{eq:qprime}
  {q'}^2 + \left[(\alpha'+\beta'-\delta'+1) + (\delta'-1)a'\right]q' + \alpha'\beta' a' = 0.
\end{equation}
Substituting the expressions for $a',q';\alpha',\beta';\delta'$
from~(\ref{eq:relabel}) into~(\ref{eq:qprime}) yields
\begin{equation}
\label{eq:aqcurvedef}
q^2+\left[(\gamma-1)-(2\alpha\beta+\alpha+\beta)a \right]q + \alpha\beta a\left[(\alpha\beta+\alpha+\beta+1)a-\gamma\right]=0,
\end{equation}
which is the desired condition on~$q$.

If $\alpha,\beta,\gamma$ are fixed, Eq.~(\ref{eq:aqcurvedef}) defines a
curve in the $a$\textendash$q$ plane.  (In the complex domain, it is a
two-sheeted Riemann surface over $\mathbb{P}^1_a$.)  Being quadratic, it is
of genus zero and can be uniformized by a $\mathbb{P}^1$-valued parameter,
say~$e$.  By examination, the parametrizations $a=a(e),$ $q=q(e)$ supplied
in the theorem will work.
\end{proof}
\begin{myremarkaftertheorem}
The theorem says that at fixed $\alpha,\beta,\gamma,$ the family of HEs for
which $x=a$ has exponents $0,2,$ but is a relatively `tame' apparent
singular point, is parametrized by the Riemann sphere~$\mathbb{P}_e^1$.
Generically this sphere is sextuply punctured, since $e=0,\gamma-1$ yield
$a=0,$ $e=\alpha\beta/(\alpha+\beta-\gamma+1),\infty$ yield $a=1,$ and
$e=\alpha,\beta$ yield $a=\infty$; and necessarily $a\neq0,1,\infty$.
\end{myremarkaftertheorem}

\begin{mydefinition}
The special function $G(x)\defeq G(\alpha,\beta;\gamma;e;x)$ is defined to equal
$\Hl(a,q;\alpha,\beta;\gamma,\delta;x)$ on a neighborhood of $x=0$.  Here
$\delta=\alpha+\beta-\gamma+2$ (so~that $\epsilon=-1$), and $a,q$~are given
in~terms of~$e$ (and $\alpha,\beta,\gamma$) in Theorem~\ref{thm:family}.
\end{mydefinition}

How the parametrized local functions $G(x),{}_3F_2(x)$ are related will now
be explained.  The following facts about~${}_3F_2(x)$ will be needed.  It
can be defined as a solution of a canonical third-order differential
equation, or as a sum $\sum_{n=0}^\infty d(n)x^n,$ where
$d\colon\mathbb{N}\to\mathbb{C}$ satisfies a first-order
recurrence~\cite[Ch.~4]{Erdelyi53}.  The differential equation is
$\bigl[xD\prod_{i=1}^2(xD+b_i-1)-x\prod_{i=1}^3(xD+a_i)\bigr]F=0$; or
equivalently,
\begin{sizemultline}{\small}
\label{eq:ClausenDE}
\biggl\{D^3 + \left[\frac{a_1+a_2+a_3+3}{x-1}-\frac{b_1+b_2+1}{x(x-1)}\right] D^2 \\
{}+\left[\frac{a_1a_2 + a_2a_3 + a_3a_1 + a_1 + a_2 + a_3 + 1}{x(x-1)} - \frac{b_1b_2}{x^2(x-1)}\right] D
+\left[\frac{a_1a_2a_3}{x^2(x-1)}\right]\biggr\}F = 0.
\end{sizemultline}
Here $a_1,a_2,a_3;b_1,b_2$ are complex-valued parameters, and the equation
is invariant under separate permutations of $a_1,a_2,a_3$ and~$b_1,b_2$.
The singular points are $x=0,1,\infty,$ with respective characteristic
exponents
$0,\nobreak{1-b_1},\nobreak{1-b_2};\allowbreak0,\nobreak1,\nobreak
s;\allowbreak a_1,\nobreak a_2,\nobreak a_3,$ where
$s\defeq b_1+b_2-a_1-a_2-a_3$ is the `parametric excess.'  If either of
$b_1,b_2$ is a nonpositive integer, the local solution
of~(\ref{eq:ClausenDE}) corresponding to the zero exponent at~$x=0$ will
generically be logarithmic; but if neither is a nonpositive integer, which
will be assumed henceforth, the solution will be analytic
on~$\left|x\right|<1$.  When normalized to unity at~$x=0,$ it is denoted
${}_3F_2(x)\defeq {}_3F_2(a_1,a_2,a_3;b_1,b_2;x)$.

Equation~(\ref{eq:ClausenDE}) is not the most general third-order (linear,
homogeneous) FDE with the above exponents, but it is the most general one
associated to a first-order recurrence.  The coefficient sequence
$d\colon\mathbb{N}\to\mathbb{C}$ of~${}_3F_2(x)$ satisfies
\begin{equation}
\Bigl\{(n+b_1)(n+b_2)(n+1)E - (n+a_1)(n+a_2)(n+a_3)\Bigr\}\,d=0,
\label{eq:ClausenDErec}  
\end{equation}
initialized by~$d(0)=1$.  Here $E$~is the shift operator.  This
hypergeometric recurrence is of~course the traditional way of
defining~${}_3F_2(x)$.

\begin{mytheorem}
\label{thm:latermain}
  The local function $G(\alpha,\beta;\gamma;e;x)$ can be expressed in~terms
of~${}_3F_2,$ as ${}_3F_2(\alpha,\beta,e+1;\gamma,e;x)$.
\end{mytheorem}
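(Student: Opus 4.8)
The plan is to show that the coefficient sequence $c\colon\mathbb{N}\to\mathbb{C}$ of $G(\alpha,\beta;\gamma;e;x)$ coincides with the coefficient sequence $d\colon\mathbb{N}\to\mathbb{C}$ of ${}_3F_2(\alpha,\beta,e+1;\gamma,e;x)$, by proving that $c$ satisfies the \emph{first-order} recurrence~(\ref{eq:ClausenDErec}) attached to those ${}_3F_2$ parameters. By definition, $c$ satisfies the second-order recurrence~(\ref{eq:HErec}) with the specific parameter values $\epsilon=-1$, $\delta=\alpha+\beta-\gamma+2$, and with $a,q$ given by the parametrization of Theorem~\ref{thm:family}. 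Since both $G$ and the ${}_3F_2$ are normalized to unity at $x=0$ (i.e.\ $c(0)=d(0)=1$), it suffices to exhibit a factorization of the second-order difference operator in~(\ref{eq:HErec}) whose right-hand factor is precisely the first-order operator $(n+b_1)(n+b_2)(n+1)E-(n+a_1)(n+a_2)(n+a_3)$ appearing in~(\ref{eq:ClausenDErec}), with $(a_1,a_2,a_3)=(\alpha,\beta,e+1)$ and $(b_1,b_2)=(\gamma,e)$.

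Concretely, first I would substitute the Theorem~\ref{thm:family} expressions for $a$ and $q$ into the three coefficient polynomials of~(\ref{eq:HErec}), clearing the common denominator $(e-\alpha)(e-\beta)$. This turns the leading coefficient $(n+\gamma+1)(n+2)a$, the middle coefficient, and the trailing coefficient $(n+\alpha)(n+\beta)$ into explicit polynomials in $n$ (with parameters $\alpha,\beta,\gamma,e$). Second, I would write the desired second-order operator as a product of two first-order difference operators, the right factor being the ${}_3F_2$ operator above and the left factor being an operator of the form $\bigl[L_1(n)E+L_0(n)\bigr]$ with $L_1,L_0$ to be determined; expanding the product gives three equations (matching the coefficients of $E^2$, $E^1$, $E^0$) that $L_1,L_0$ must satisfy. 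Third, I would solve for $L_1,L_0$ from the $E^2$ and $E^0$ matchings and then verify that the $E^1$ matching holds identically — this last identity is where the precise form of the parametrization earns its keep, and it is the computation that the curve~(\ref{eq:aqcurvedef}), equivalently the apparentness of $x=a$, is engineered to make succeed.

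The main obstacle is the middle-coefficient identity: after fixing the outer factors by the two extreme matchings, one must check that the $E^1$ coefficient of the product equals the (substituted) middle coefficient of~(\ref{eq:HErec}) as polynomials in $n$. This is a genuine polynomial identity in $n$ with coefficients that are rational in $\alpha,\beta,\gamma,e$, and it will hold \emph{only} because $a,q$ lie on the curve of Theorem~\ref{thm:family}; indeed the whole point of choosing $\epsilon=-1$ with $x=a$ apparent is that it forces the second-order recurrence to factor, collapsing to first order. I expect this verification to reduce, after clearing denominators, to checking that~(\ref{eq:aqcurvedef}) holds — which it does by construction — so the algebra, though lengthy, is bound to close. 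Once the factorization is in hand, the right factor annihilates $d$, hence annihilates $c$ as well, and since the two sequences share the same initial value and satisfy the same first-order recurrence they are identical, giving $G=\,{}_3F_2(\alpha,\beta,e+1;\gamma,e;x)$ on a neighborhood of $x=0$.
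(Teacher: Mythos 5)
Your strategy is the same as the paper's primary proof: specialize the recurrence~(\ref{eq:HErec}) to $\epsilon=-1$, $\delta=\alpha+\beta-\gamma+2$, and the $(a,q)$ of Theorem~\ref{thm:family}, and factor the resulting second-order difference operator with the first-order ${}_3F_2$ operator of~(\ref{eq:ClausenDErec}), for the parameters $\alpha,\beta,e+1;\gamma,e$, as the right factor; the paper records exactly this factorization in~(\ref{eq:Efac}). One technical adjustment: your ansatz $L_1(n)E+L_0(n)$ with $L_1,L_0$ \emph{polynomial} is too restrictive. Matching the $E^2$ and $E^0$ coefficients forces $L_1(n)=a/(n+e+1)$ and $L_0(n)=-1/(n+e+1)$, so the left factor is $\tfrac{1}{n+e+1}\bigl\{\tfrac{e(e-\gamma+1)}{(e-\alpha)(e-\beta)}E-1\bigr\}$, with rational coefficients; equivalently, clear the denominator and factor $(n+e+1)$ times the operator of~(\ref{eq:HErec}). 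With that modification your coefficient-matching scheme does go through, and you are right that the middle ($E^1$) matching is precisely where the constraint~(\ref{eq:aqcurvedef}), i.e., the apparentness of $x=a$, is consumed.

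The genuine problem is your concluding inference, which runs backwards. From the factorization $H=LR$ and $Hc=0$ you write that ``the right factor annihilates $d$, hence annihilates $c$ as well,'' but a right factor of an operator annihilating $c$ need not annihilate $c$: generically $Rc$ is a nonzero solution of $Lu=0$. The paper's proof uses the safe direction of the implication: $Rd=0$ gives $Hd=LRd=0$, so $d$ satisfies the same second-order recurrence as $c$, with the same initialization $d(0)=c(0)=1$, $d(-1)=c(-1)=0$; since the leading coefficient $(n+\gamma+1)(n+2)a$ of~(\ref{eq:HErec}) is nonzero for $n\ge-1$ (because $\gamma$ is not a nonpositive integer and $a\neq0$), the solution is unique and $c=d$. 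Your direction can be salvaged, but it needs an extra step you did not supply: set $u\defeq Rc$; then $\bigl\{\tfrac{e(e-\gamma+1)}{(e-\alpha)(e-\beta)}E-1\bigr\}u=0$, and $u(-1)=0$ because the $E$-coefficient of $R$ carries the factor $(n+1)$ and $c(-1)=0$; since $e(e-\gamma+1)/[(e-\alpha)(e-\beta)]=a\neq0$, induction gives $u\equiv0$ on $n\ge-1$, so $c$ really does satisfy the first-order recurrence and equals $d$. Without this repair (or the paper's rerouting through the second-order recurrence), the last step of your argument is a gap. For completeness, note the paper also gives an alternative proof in the other direction, factoring the third-order differential operator of~(\ref{eq:ClausenDE}) with the Heun operator as right factor, as in~(\ref{eq:Dfac}).
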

\begin{proof}
It is readily verified that if $\epsilon=-1$ (so that
$\delta=\alpha+\beta-\gamma+2$), and $a,q$~are given the values supplied in
Theorem~\ref{thm:family}, then the second-order difference operator in the
recurrence equation~(\ref{eq:HErec}), the normalized solution of which is
the coefficient sequence $c\colon\mathbb{N}\to\mathbb{C}$ in the series
expansion of~$G,$ has the factorization
\begin{multline}
    \label{eq:Efac}
  \tfrac1{n+e+1}\,\Bigl\{\tfrac{e(e-\gamma+1)}{(e-\alpha)(e-\beta)}E - 1\Bigr\}\\
  {}\times\Bigl\{(n+\gamma)(n+e)(n+1)E - (n+\alpha)(n+\beta)(n+e+1)\Bigr\}.
\end{multline}
The right factor is the operator appearing in~(\ref{eq:ClausenDErec}), with
$a_1,a_2,a_3;b_1,b_2$ set to $\alpha,\beta,e+1;\gamma,e$.  So the
hypergeometric sequence $d\colon\mathbb{N}\to\mathbb{C}$ with these
parameters is a solution of~(\ref{eq:HErec}).  Its initial conditions are
those of~$c,$ i.e., $d(0)=1,\allowbreak d(-1)=0$.  So $c=d,$ and
$G(x)={}_3F_2(x)$.
\end{proof}
\begin{proof}[Alternative Proof]
  It is readily verified that if $a_1,a_2,a_3;b_1,b_2$ are set to
  $\alpha,\beta,e+1;\gamma,e,$ then the third-order differential operator
  in Eq.~(\ref{eq:ClausenDE}), the unique (normalized) analytic local
  solution of which is~${}_3F_2,$ has the factorization
  \begin{multline}
    \label{eq:Dfac}
    \left\{D + \left[\frac{e+1}x+\frac1{x-1}+\frac1{x-a}\right]\right\}\\ 
    {}\times \biggl\{D^2 + \left[ \frac\gamma x + \frac\delta{x-1} +
    \frac\epsilon{x-a} \right]D + \left[\frac{\alpha\beta\, x -
    q}{x(x-1)(x-a)}\right]\biggr\},
  \end{multline}
  in which $\epsilon\defeq -1,$ $\delta\defeq \alpha+\beta-\gamma+2,$ and
  $a,q$ are defined as in Theorem~\ref{thm:family}.  The right factor is
  the operator appearing in the HE~(\ref{eq:HE}).  So, ${}_3F_2(x)=G(x)$.
\end{proof}
\begin{myremark*}
The operator factorization (\ref{eq:Efac}), resp.~(\ref{eq:Dfac}), can be
viewed as taking place in the noncommutative polynomial ring generated by
$n,E,$ resp.\ by~$x,D$.  (The noncommutativity is specified by $[E,n]=E,$
resp.\ $[D,x]=1$.)  Such factorizations, when they exist, can be found by
noncommutative Gr\"obner basis techniques~\cite{Melenk94}.  There are also
specialized algorithms for finding hypergeometric-sequence solutions to
recurrences, i.e., for finding first-order right factors of difference
operators~\cite{Cluzeau2006,Petkovsek92}.  But in~general, it is difficult
to characterize when a difference or differential operator will factor.  If
$\epsilon=-1$ and $(a,q)$ lies on the algebraic
curve~(\ref{eq:aqcurvedef}), then the second-order difference operator
associated with~$\Hl$ will factor, as shown in~(\ref{eq:Efac}).
Conversely, the factoring of the third-order differential operator
associated with~${}_3F_2(a_1,a_2,e+1;b_1,e;x),$ as in~(\ref{eq:Dfac}),
yields a parametrization of the $a$\textendash$q$ curve.
\end{myremark*}

\begin{mycorollary}
  \label{cor:two2F1s}
  $G(\alpha,\beta;\gamma;e;x)$ can be expressed in~terms of the Gauss
  function ${}_2F_1,$ as
  ${}_2F_1(\alpha,\beta;\gamma;x)+(\alpha\beta/\gamma
  e)x\,{}_2F_1(\alpha+1,\beta+1;\gamma+1;x),$ or equivalently, in~terms of
  ${}_2F_1$ and its derivative as
  $e^{-1}[xD+e]\,{}_2F_1(\alpha,\beta;\gamma;x)$.  It can also be written
  as $[(e-\gamma+1)/e]\,{}_2F_1(\alpha,\beta;\gamma;x) +
  [(\gamma-1)/e]\,{}_2F_1(\alpha,\beta;\gamma-1;x)$.
\end{mycorollary}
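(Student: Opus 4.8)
The plan is to leverage Theorem~\ref{thm:latermain}, which already identifies $G(\alpha,\beta;\gamma;e;x)$ with ${}_3F_2(\alpha,\beta,e+1;\gamma,e;x)$, so that the corollary becomes a purely hypergeometric statement about rewriting this ${}_3F_2$. First I would write out its series coefficients, namely $d(n)=\frac{(\alpha)_n(\beta)_n(e+1)_n}{(\gamma)_n(e)_n\,n!}$, and exploit the elementary cancellation $(e+1)_n/(e)_n=(n+e)/e$. Writing $c(n)\defeq\frac{(\alpha)_n(\beta)_n}{(\gamma)_n\,n!}$ for the coefficients of ${}_2F_1(\alpha,\beta;\gamma;x)$, this gives $d(n)=\frac{n+e}{e}\,c(n)=c(n)+e^{-1}n\,c(n)$.

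Summing against $x^n$ and recognizing $\sum_{n} n\,c(n)x^n = xD\,{}_2F_1(\alpha,\beta;\gamma;x)$ then yields the middle expression $G=e^{-1}[xD+e]\,{}_2F_1(\alpha,\beta;\gamma;x)$ immediately. From here the first expression follows by substituting the standard derivative rule $D\,{}_2F_1(\alpha,\beta;\gamma;x)=(\alpha\beta/\gamma)\,{}_2F_1(\alpha+1,\beta+1;\gamma+1;x)$, so that the term $e^{-1}xD\,{}_2F_1$ becomes $(\alpha\beta/\gamma e)\,x\,{}_2F_1(\alpha+1,\beta+1;\gamma+1;x)$, exactly as stated.

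For the third (two-$\gamma$) expression I would instead recast $xD\,{}_2F_1$ using the contiguous relation $xD\,{}_2F_1(\alpha,\beta;\gamma;x)=(\gamma-1)\bigl[{}_2F_1(\alpha,\beta;\gamma-1;x)-{}_2F_1(\alpha,\beta;\gamma;x)\bigr]$. Substituting this into the middle expression and collecting the two ${}_2F_1$ terms produces the coefficients $(e-\gamma+1)/e$ and $(\gamma-1)/e$, as claimed. The only step requiring any care---and the closest thing to an obstacle---is verifying this contiguous relation, which I would do at the level of coefficients: the identity $n\,c(n)=(\gamma-1)\bigl[\frac{(\alpha)_n(\beta)_n}{(\gamma-1)_n\,n!}-c(n)\bigr]$ reduces, via $(\gamma-1)_n=(\gamma-1)(\gamma)_{n-1}$ and $(\gamma)_n=(\gamma)_{n-1}(\gamma+n-1)$, to the trivial identity $1-\frac{\gamma-1}{\gamma+n-1}=\frac{n}{\gamma+n-1}$. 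Everything else is bookkeeping, and no convergence subtleties arise, since all three identities hold as formal power series while the underlying series converge on a neighborhood of~$x=0$.
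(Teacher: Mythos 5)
Your proof is correct, and for the first two representations it coincides with the paper's route: both rest on identifying $G$ with ${}_3F_2(\alpha,\beta,e+1;\gamma,e;x)$ via Theorem~\ref{thm:latermain} and on the cancellation $(e+1)_n/(e)_n=(n+e)/e$ in the series coefficients---the paper cites Luke~\cite[\S5.2]{Luke75} for the reduction and remarks that it ``can be verified by comparing series expansions,'' which is exactly the computation you carry out explicitly. You diverge only on the third representation. You derive it from the intermediate form $e^{-1}[xD+e]\,{}_2F_1(\alpha,\beta;\gamma;x)$ using the ${}_2F_1$ contiguous relation $xD\,{}_2F_1(\alpha,\beta;\gamma;x)=(\gamma-1)\bigl[{}_2F_1(\alpha,\beta;\gamma-1;x)-{}_2F_1(\alpha,\beta;\gamma;x)\bigr]$, which you verify from scratch at the coefficient level (your verification is sound). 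The paper instead invokes Rainville's three-term contiguity relation for the general ${}_3F_2$, namely $F(a_3+)=[(a_3-b_1+1)/a_3]\,F+[(b_1-1)/a_3]\,F(b_1-)$ from~\cite{Rainville45}, specialized to $(a_1,a_2,a_3;b_1,b_2)=(\alpha,\beta,e;\gamma,e)$, whereupon both ${}_3F_2$'s on the right collapse to ${}_2F_1$'s because the $e$'s cancel. The two derivations are equivalent---your relation is essentially that specialization in disguise---but yours is fully self-contained, needing no external contiguity lemma, whereas the paper's version situates the identity within the classical contiguity machinery for ${}_pF_q$ and gets the third form directly at the ${}_3F_2$ level rather than passing through the $xD$ form.
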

\begin{proof}
  For any generalized hypergeometric function, the case when an upper
  parameter exceeds a lower one by an integer is special: it can be
  reduced.  For instance~\cite[\S5.2]{Luke75},
  ${}_3F_2(a_1,a_2,e+1;b_1,e;x)$ can be written in~terms of the Gauss
  function, as
  ${}_2F_1(a_1,a_2;b_1;x)+(a_1a_2/b_1e)x\,{}_2F_1(a_1+1,a_2+1;b_1+1;x)$,
  i.e., as $e^{-1}[xD+e]\,{}_2F_1(a_1,a_2;b_1;x)$.  This can be verified by
  comparing series expansions.  It is also the case that the general
  function $F:={}_3F_2(a_1,a_2,a_3;b_1,b_2;x)$ satisfies the three-term
  contiguity relation
  \begin{equation}
    F(a_3+)=[(a_3-b_1+1)/a_3]\,F + [(b_1-1)/a_3]\,F(b_1-),
  \end{equation}
  where $a_3+$ and $b_1-$ indicate incrementing and decrementing of
  parameters~\cite{Rainville45}.  Setting
  $(a_1,a_2,a_3;b_1,b_2)=(\alpha,\beta,e;\gamma,e)$ yields the second
  representation.
\end{proof}

By Theorem~\ref{thm:latermain}, the function $G(x)$ has both an $\Hl$ and a
${}_3F_2$ representation.  Their relationship is revealed by their
P-symbols, which are respectively
\begin{equation}
\left\{\!
\begin{array}{cccc|c}
0 & 1 & a & \infty & x \\
\hline
0 & 0 & 0 & \alpha & \\
1-\gamma & 1-\delta & 2 & \beta & \\
\end{array}
\!\right\},
\qquad
\left\{\!
\begin{array}{cccc|c}
0 & 1 & a & \infty & x \\
\hline
0 & 0 & 0 & \alpha &  \\
1-\gamma & 1-\delta & 2 & \beta & \\
\hline
1-e & 1 & 1 & e+1 & \\
\end{array}
\!\right\},
\end{equation}
with $\delta\defeq\alpha+\beta-\gamma+2$.  In the third-order P-symbol on
the right, the ordinary point $x=a$ is listed explicitly, with exponents
$0,1,2$.  (Any order-$k$ FDE has exponents $\{0,\dots,k-1\}$ at any finite
ordinary point.)  The final exponent row comes from the left-multiplication
by a first-order differential operator that was performed
in~(\ref{eq:Dfac}), but the two P-symbols are otherwise the same.
Appending the `$1$' to the $x=a$ column converts $x=a$ to an ordinary
point, leaving only three singular points.

This reduction of $\Hl$ to ${}_3F_2$ is therefore an example of {\em
desingularization\/}: removing an apparent singular point from a
differential operator by left-multiplying by an auxiliary, appropriately
chosen operator.  Such desingularizations may be constructed
algorithmically, as Abramov et~al.~\cite{Abramov2006} show.  They also show
how to desingularize difference operators, which in a certain sense have
singular points too.  Any order-$d$ operator $L\defeq \sum_{j=0}^d
p_j(n)E^j,$ where the~$p_j$ are polynomials with $p_d\neq0,$ may be viewed
as acting on functions defined on~$\mathbb{C}_n,$ the entire complex
$n$-plane.  The difference equation $LF=0$ will have $d$~independent
solutions that are meromorphic on~$\mathbb{C}_n$ and analytic on a common
left half-plane.  The (leading) singular points of~$L$ on~$\mathbb{C}_n$
are the roots of the polynomial $p_d(n-d),$ by definition; they are
possible pole locations.  If $n=n_0$ is a root, but none of the
$d$~solutions has a pole there, it is said to be apparent.
A~desingularized version of~$L$ can be obtained by, e.g., left-multiplying
$L$ by an appropriate difference operator.

The right factor in~(\ref{eq:Efac}), the first-order difference operator
associated to the function ${}_3F_2(\alpha,\beta,e+1;\gamma,e;x),$ has
singular points at $n=0,1-\gamma,1-e$.  It can be shown that the one at
$n=1-e$ is merely apparent.  The left-multiplication in~(\ref{eq:Efac})
removes this apparent singular point.  So, the reduction of this~${}_3F_2$
to~$\Hl$ is also a desingularization.  The two desingularizations are dual
to each other.

\section{${}_3F_2$ transformations}
\label{sec:3F2}

The generalized hypergeometric function
${}_3F_2:={}_3F_2(a_1,a_2,a_3;b_1,b_2;x)$ satisfies many identities, but
its theory is less developed than that of~${}_2F_1$.  It was shown
in~\S\,\ref{sec:Hlto3F2} that the {\em ad~hoc\/} special function
\begin{align*}
G(\alpha,\beta;\,\gamma;\,e;\,x)&={}_3F_2(\alpha,\beta,e+1;\,\gamma,e;\,x)\\
&= e^{-1}[x\,D+e]\,{}_2F_1(\alpha,\beta;\,\gamma;\,x)
\end{align*}
has a Heun representation.  As a result, certain transformation laws
for~$G$ (and hence for a restricted~${}_3F_2$) can be deduced from those
for~$\Hl$.

Pfaff and Euler-like transformations of this ${}_3F_2$ are given in
Theorem~\ref{thm:3F2main} below, which is the main result of this final
section.  Each transformation has four free parameters and a nonlinear
parametric constraint.  Part~(\ref{enum:i}) of the theorem subsumes several
interesting identities discovered by Bailey and Slater, which have fewer
free parameters.  They appear as corollaries of the theorem.

Some terminology from general hypergeometric summation theory will be used.
The function
${}_{r+1}F_r(x):={}_{r+1}F_r(a_1,\dots,a_{r+1};b_1,\dots,b_r;x)$ can be
defined as the solution of a canonical order-$(r+1)$ FDE, or as a sum
$\sum_{n=0}^\infty d(n)x^n$ that converges on~$\left|x\right|<1$.  The FDE
is $\bigl[xD\prod_{i=1}^r(xD+b_i-1)-x\prod_{i=1}^{r+1}(xD+a_i)\bigr]F=0,$
and $d\colon\mathbb{N}\to\mathbb{C}$ satisfies $\bigl[(n+1)\prod_{i=1}^r
(n+b_i)E -\prod_{i=1}^{r+1}(n+a_i)\bigr]d=0$.

In~general, if $\{a_i\}_{i=1}^{r+1}$ and~$\{b_i\}_{i=1}^r$ can be
separately permuted so that $a_1+1=a_2+b_1=\dots=a_{r+1}+b_r,$ then the
function ${}_{r+1}F_r$ is said to be {\em well poised\/}.  If each of these
$r+1$ equalities holds except the last, it is {\em nearly well poised\/},
or simply {\em nearly poised\/}.  If ${}_{r+1}F_r$ is well poised, resp.\
nearly poised, and also one of the $\{b_i\}_{i=1}^r,$ resp.\
$\{b_i\}_{i=1}^{r-1},$ equals~$a_1/2,$ it is {\em very well poised\/},
resp.\ {\em nearly very well poised\/}.

\begin{mytheorem}
\ 
  \begin{enumerate}
    \item\label{enum:i} The function ${}_3F_2$ satisfies the Pfaff-like
    identity
  \begin{align*}
    &{}_3F_2(a_1,a_2,e+1;\,b_1,e;\,x)\\
    &{}= (1-x)^{-a_1}\,
    {}_3F_2(a_1,b_1-a_2-1,e'+1;\,b_1,e';\,\tfrac{x}{x-1}),
  \end{align*}
  in which the $e\leftrightarrow e'$ correspondence is
  \begin{displaymath}
  e'=\frac{(b_1-a_2-1)e}{e-a_2}.
  \end{displaymath}
  \item\label{enum:ii} 
  The function ${}_3F_2$ satisfies the Euler-like identity
  \begin{align*}
    &\qquad\qquad{}_3F_2(a_1,a_2,e+1;\,b_1,e;\,x)\\
    &\qquad\qquad{}= (1-x)^{b_1-a_1-a_2-1}\,
    {}_3F_2(b_1-a_1-1,b_1-a_2-1,e''+1;\,b_1,e'';\,x),
  \end{align*}
  in which the $e\leftrightarrow e''$ correspondence is
  \begin{displaymath}
  e'' = \frac{ (b_1-a_1-1)(b_1-a_2-1)e}{(b_1-a_1-a_2-1)e+a_1a_2}.
  \end{displaymath}
  \end{enumerate}
  \label{thm:3F2main}
  Here $e,e',$ resp.\ $e,e'',$ are taken to be finite.  Also, the M\"obius
  transformation $e\mapsto e',$ resp.\ $e\mapsto e'',$ which is of the
  lower triangular form $e\mapsto \frac{Ae\hphantom{+B}}{Ce+D},$ is assumed
  to be nonsingular, which will be the case if all upper parameters are
  nonzero, i.e., if neither~${}_3F_2$ is identically equal to unity.

\end{mytheorem}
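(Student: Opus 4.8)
The plan is to transport the two classical Heun transformations of \S\ref{sec:autos} through the dictionary of \S\ref{sec:Hlto3F2}. By Theorem~\ref{thm:latermain} and the definition of the function~$G,$ the left-hand side ${}_3F_2(a_1,a_2,e+1;b_1,e;x)$ equals $\Hl(a,q;a_1,a_2;b_1,\delta;x)$ with $\epsilon=-1,$ $\delta=a_1+a_2-b_1+2,$ and $a,q$ the functions of~$e$ supplied by Theorem~\ref{thm:family}. For part~(\ref{enum:i}) I would apply the Pfaff-type M\"obius transformation of~$\Hl$ (the entry $[1_+\infty_+][a_+]$ of \S\ref{sec:autos}, with prefactor $(1-x)^{-\alpha}$), and for part~(\ref{enum:ii}) the Euler-type F-homotopy $[1_-][a_+][\infty_-]$ (prefactor $(1-x)^{1-\delta}$). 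Writing $(\alpha,\beta,\gamma)=(a_1,a_2,b_1)$ and substituting $\delta=a_1+a_2-b_1+2$ into the transformed parameter lists, one reads off new upper parameters $(a_1,b_1-a_2-1)$ over lower $b_1$ in the Pfaff case and $(b_1-a_1-1,b_1-a_2-1)$ over lower $b_1$ in the Euler case, with prefactor exponent $1-\delta=b_1-a_1-a_2-1$; these are exactly the shapes claimed.

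The crucial structural point is that each transformed Heun equation again lies in the reducible family of \S\ref{sec:Hlto3F2}, so that it can be read back as a ${}_3F_2$ of the same type. A direct check of the four new exponent pairs shows the transformed value of~$\epsilon$ is again~$-1.$ Moreover the transformed fourth singular point is again \emph{apparent}: both the M\"obius change of variable and the multiplication by the accompanying power prefactor (analytic and nonvanishing at that point, since $a\neq1$) preserve the absence of logarithms in the local solutions there. Hence the transformed $\Hl$ is itself a $G$-function in its new variable $\tfrac{x}{x-1}$ (resp.~$x$), and by Theorem~\ref{thm:latermain} it is a ${}_3F_2$ of the required $(a_1,a_2,e+1;b_1,e)$ form. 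This is the step I expect to carry the conceptual weight; once it is granted, only the value of the new auxiliary parameter remains.

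To pin down $e'$ and $e''$ cleanly I would first invert the parametrization of Theorem~\ref{thm:family}. Taking the ratio of its two defining relations gives $q/(a_1a_2\,a)=(e+1)/e,$ whence
\[
  e=\frac{a_1 a_2\,a}{\,q-a_1 a_2\,a\,}.
\]
Applying this inverse to the transformed triple $(\alpha',\beta';a',q')$ produced by each Heun transformation, and substituting $q=a_1a_2\,a\,(e+1)/e$ together with the formula $a=a(e),$ reduces the problem to a rational identity in~$e$ that simplifies to $e'=(b_1-a_2-1)e/(e-a_2)$ in the Pfaff case and to $e''=(b_1-a_1-1)(b_1-a_2-1)e/[(b_1-a_1-a_2-1)e+a_1a_2]$ in the Euler case. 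This is the main computation, but it is routine given the inverse formula; as a cross-check, mapping $a'$ rather than $q'$ through the parametrization of Theorem~\ref{thm:family} yields the same value, confirming that $(a',q')$ indeed lies on the apparent-singularity curve~(\ref{eq:aqcurvedef}).

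Finally I would dispose of the genericity hypotheses. The assumptions that $e,e',e''$ be finite and that the lower-triangular M\"obius maps $e\mapsto e'$ and $e\mapsto e''$ be nonsingular correspond precisely to staying away from the punctures of~$\mathbb{P}^1_e$ (where $a\in\{0,1,\infty\}$) noted after Theorem~\ref{thm:family}, and to the nonvanishing of the denominators displayed above. These conditions guarantee that every $G$-to-$\Hl$ identification used is legitimate and that no division by zero occurs in the derivation.
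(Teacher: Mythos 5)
Your proposal is correct and takes essentially the same route as the paper's own proof: both transport the Pfaff-type transformation $[1_+\infty_+][a_+]$ and the Euler-type F-homotopy $[1_-][a_+][\infty_-]$ of~$\Hl$ through the parametrization of Theorem~\ref{thm:family}, realizing $e\mapsto e'$ and $e\mapsto e''$ as the M\"obius correspondences induced on the parametric level between the genus-zero $(a,q)$-curves. Your write-up merely makes explicit two steps the paper leaves implicit---the inversion $e=\alpha\beta a/(q-\alpha\beta a)$ obtained from the ratio $q/(\alpha\beta a)=(e+1)/e$, and the check that the transformed HE again has $\epsilon=-1$ with $x=a$ still apparent (since the prefactor is analytic and nonvanishing there)---and the resulting values of $e'$ and~$e''$ are exactly those of the theorem.
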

\begin{proof}
  The transformations of~$\Hl(x):=\Hl(a,q;\alpha,\beta;\gamma,\delta;x)$
  analogous to Pfaff's and Euler's transformations of~${}_2F_1$ are,
  from~\S\,\ref{sec:autos},
  \begin{eqnarray*}
    {}[1_+\infty_+][a_+] & \!\!\sim\!\! &{(1-x)}^{-\alpha}\Hl(\tfrac{a}{a-1},\tfrac{-q+\gamma \alpha a}{a-1};\,\alpha,\alpha-\delta+1;\,\gamma,\alpha-\beta+1;\,\tfrac{x}{x-1}),\\
    {}[1_-][a_+][\infty_-] & \!\!\sim\!\! &(1-x)^{1-\delta}\,\Hl
\bigl(a,q-(\delta-1)\gamma a;\, \beta-\delta+1,\alpha-\delta+1; \,
\gamma,2-\delta; \, x\bigr).
  \end{eqnarray*}
  Theorem~\ref{thm:family} gives the parametrization
  $(a,q)=\left(a(e),q(e)\right)$ of the case when the singular point $x=a$
  is a (simple) apparent one.  In this case, $\Hl(x)$ equals
  $G(\alpha,\beta;\gamma;e;x)={}_3F_2(\alpha,\beta,e+1;\gamma,e;x)$.  The
  transformed versions of $\Hl(x)$ have similar parametrizations
  $(a',q')=\left(a'(e'),q'(e')\right)$ and
  $(a'',q'')=\left(a''(e''),q''(e'')\right),$ and may likewise be written
  in~terms of~${}_3F_2$.  Here $a'=\frac{a}{a-1},$
  $q'=\frac{-q+\gamma\alpha a}{a-1},$ and $a''=a,$ $q''=q-(\delta-1)\gamma
  a$.  The transformations therefore yield isomorphisms between genus-zero
  algebraic curves, $(a,q)\leftrightarrow(a',q')$ and
  $(a,q)\leftrightarrow(a'',q''),$ which on the parametric level are
  performed by M\"obius correspondences $e\leftrightarrow e'$
  and~$e\leftrightarrow e''$.  Computing these explicitly yields the
  formulas in the theorem.
\end{proof}
\begin{myremark*}
  The transformations of the theorem can also be verified by expressing
  each ${}_3F_2$ in~terms of two ${}_2F_1$'s, as in
  Corollary~\ref{cor:two2F1s}, and exploiting results on~${}_2F_1$.
  
  The Pfaff and Euler-like transformations generate an order-$4$ group of
  transformations of the family of ${}_3F_2$'s of the form
  ${}_3F_2(a_1,a_2,e+1;b_1,e;x)$.  The group is isomorphic to
  $\mathcal{D}_2\cong\mathbb{Z}_2\times\mathbb{Z}_2$.  (The third
  nontrivial element of this group is a twisted Pfaff transformation,
  obtained from the Pfaff-like one by interchanging $a_1,a_2$.)  If
  $a_1\leftrightarrow a_2$ is included explicitly, the resulting group will
  be isomorphic to~$\mathcal{B}_2,$ i.e., to the dihedral group of
  order~$8$.  This group structure, familiar from~${}_2F_1,$ is inherited
  from the automorphism group of the~HE\null.
\end{myremark*}

\begin{mycorollary}
  \label{cor:corserpentis}
  The function ${}_3F_2$ satisfies the Pfaff-like identity  
  \begin{align*}
    &\qquad{}_3F_2(a_1,a_2,-a_2+1;\,b_1,-a_2;\,x)\\
    &\qquad\qquad{}= (1-x)^{-a_1}\,
    {}_3F_2(a_1,b_1-a_2-1,\tfrac{b_1-a_2+1}2;\,b_1,\tfrac{b_1-a_2-1}2;\,\tfrac{x}{x-1}).
  \end{align*}
\end{mycorollary}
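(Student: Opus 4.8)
The plan is to obtain Corollary~\ref{cor:corserpentis} as the single parametric specialization $e=-a_2$ of the Pfaff-like identity in Theorem~\ref{thm:3F2main}(\ref{enum:i}); no fresh work with series or with the $\Hl$ automorphisms is required, since everything is already packaged in that theorem. First I would match the two left-hand sides. In the theorem the left function is ${}_3F_2(a_1,a_2,e+1;\,b_1,e;\,x)$, so setting $e=-a_2$ turns its upper parameters into $a_1,a_2,-a_2+1$ and its lower parameters into $b_1,-a_2$, which is verbatim the left-hand ${}_3F_2$ of the corollary.

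Next I would push the value $e=-a_2$ through the $e\leftrightarrow e'$ correspondence $e'=(b_1-a_2-1)e/(e-a_2)$ furnished by the theorem. The denominator becomes $e-a_2=-2a_2$ and the numerator becomes $-a_2(b_1-a_2-1)$, so cancelling the common factor $a_2$ gives
\begin{displaymath}
  e'=\frac{b_1-a_2-1}{2},\qquad e'+1=\frac{b_1-a_2+1}{2},
\end{displaymath}
which are precisely the second lower and third upper parameters on the right-hand side of the corollary. The remaining right-hand data—the upper parameters $a_1$ and $b_1-a_2-1$, the prefactor $(1-x)^{-a_1}$, and the argument $\tfrac{x}{x-1}$—are carried over unchanged from the theorem, so the asserted identity drops out.

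The hard part, such as it is, is merely the bookkeeping of the theorem's side conditions. The cancellation of $a_2$ above is legitimate exactly when $a_2\neq0$, and this is also the condition that the M\"obius map $e\mapsto e'$ be nonsingular at the point $e=-a_2$: the map is lower triangular with its only pole at $e=a_2$, which coincides with $-a_2$ only if $a_2=0$. The same inequality $a_2\neq0$ keeps the left-hand lower parameter $-a_2$ from vanishing (a zero lower parameter being forbidden), and together with $a_1\neq0$ and $a_2\neq1$ (so that the upper parameters $a_1$ and $-a_2+1$ stay nonzero) these are exactly the finiteness and nonsingularity hypotheses inherited from Theorem~\ref{thm:3F2main}. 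Under them the corollary is immediate.

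Finally I would note, without using it in the proof, why this particular value $e=-a_2$ is singled out: it is the one for which the transformed ${}_3F_2$ becomes nearly very well poised, since its lower parameter $(b_1-a_2-1)/2$ is exactly half of its upper parameter $b_1-a_2-1$, with the accompanying equality $(b_1-a_2-1)+1=\tfrac{b_1-a_2+1}2+\tfrac{b_1-a_2-1}2$. This is the feature that ties the corollary to the very-well-poised identities of Bailey and Slater advertised in the introduction.
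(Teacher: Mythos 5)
Your proposal is correct and takes exactly the paper's route: the paper's entire proof is ``Set $e=-a_2$ in Theorem~\ref{thm:3F2main}(\ref{enum:i}),'' and your computation $e'=\frac{(b_1-a_2-1)(-a_2)}{-2a_2}=\frac{b_1-a_2-1}{2}$ is just that substitution carried out explicitly, with the side conditions ($a_2\neq0$, nonsingularity of $e\mapsto e'$) and the nearly-very-well-poised observation correctly matching the paper's surrounding remark.
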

\begin{proof}
  Set $e=-a_2$ in Theorem~\ref{thm:3F2main}(\ref{enum:i}).
\end{proof}
\begin{myremark*}
  This special case, with three free parameters and no nonlinear
  constraint, was discovered by Bailey and
  Slater~\cite[(2.4.2.8)]{Slater66} and rediscovered by
  Baweja~\cite{Baweja81}.  Their derivations were altogether different.
  The right-hand ${}_3F_2$ is nearly very well poised, as one sees by
  permuting parameters.
\end{myremark*}

\begin{mycorollary}
\label{cor:cor0}
  There is a three-parameter reduction of ${}_3F_2$ to a
  single~~${}_2F_1$; namely,
  \begin{displaymath}
    {}_3F_2(a_1,a_2,e+1;\,b_1,e;\,x) = (1-x)\,{}_2F_1(a_1+1,a_2+1;\,b_1;\,x),
  \end{displaymath}
  in which $e:=e(a_1,a_2;b_1)=a_1a_2/(a_1+a_2-b_1+1)$.
\end{mycorollary}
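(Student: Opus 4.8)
The plan is to read the identity off from the Heun side, using the P-symbol machinery rather than manipulating the ${}_3F_2$ series directly. The key observation is that the prescribed value $e=a_1a_2/(a_1+a_2-b_1+1)$ is exactly one of the ``unphysical'' values flagged in the remark following Theorem~\ref{thm:family}: it forces the Heun parameter $a=\frac{e(e-\gamma+1)}{(e-\alpha)(e-\beta)}$ to equal $1$ (with $\alpha,\beta,\gamma$ identified with $a_1,a_2,b_1$). So the fourth singular point $x=a$ of the underlying HE collides with $x=1$, and I expect the HE to collapse to a three-point Fuchsian equation on $\{0,1,\infty\}$, i.e.\ a GHE up to an F-homotopy.

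First I would write $G={}_3F_2(a_1,a_2,e+1;b_1,e;x)=\Hl(a,q;\alpha,\beta;\gamma,\delta;x)$ with $\epsilon=-1$, $\delta=a_1+a_2-b_1+2$, and the values $a=1$, $q=\alpha\beta+\alpha+\beta-\gamma+1$ obtained by substituting this $e$ into Theorem~\ref{thm:family}. Setting $a=1$ in~(\ref{eq:HE}) merges the $\frac{\delta}{x-1}$ and $\frac{\epsilon}{x-a}$ terms and produces a double pole $x(x-1)^2$ in the denominator of the $F$-coefficient. The step that needs genuine checking---and which I expect to be the main obstacle---is that $x=1$ stays a \emph{regular} singular point rather than becoming irregular: a generic collision of two regular singular points is confluent, so one must use the specific accessory value $q$ (equivalently, the apparent-singularity condition of Theorem~\ref{thm:family}) to see that the merge is tame. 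Concretely, I would verify that $(x-1)P(x)$ and $(x-1)^2Q(x)$ remain analytic at $x=1$ and read off the indicial equation $r^2+(\delta+\epsilon-1)r+(\alpha\beta-q)=0$; with the above data this is $r^2+(\alpha+\beta-\gamma)r-(\alpha+\beta-\gamma+1)=0$, whose roots are $1$ and $\gamma-\alpha-\beta-1$.

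The resulting P-symbol then has a column $x=0$ with exponents $0,1-\gamma$; a column $x=\infty$ with exponents $\alpha,\beta$; and the merged column $x=1$ with exponents $1,\gamma-\alpha-\beta-1$ (the total of all six exponents being $1$, as Fuchs's relation for a three-point second-order equation demands). Applying the F-homotopy $(1-x)^{-1}$ as in~(\ref{eq:Fhomotopy}) shifts the $x=1$ exponents down by $1$, to $0$ and $\gamma-\alpha-\beta-2$, and the $x=\infty$ exponents up by $1$, to $\alpha+1,\beta+1$. This is precisely the P-symbol of ${}_2F_1(\alpha+1,\beta+1;\gamma;x)$. Since $G$ is the analytic, zero-exponent solution at $x=0$ normalized by $G(0)=1$, and $(1-x)^{-1}$ is analytic and equal to $1$ there, the function $(1-x)^{-1}G(x)$ is the analytic solution of this GHE normalized to unity at the origin, hence equals ${}_2F_1(\alpha+1,\beta+1;\gamma;x)$. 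Re-identifying $(\alpha,\beta,\gamma)=(a_1,a_2,b_1)$ gives $G(x)=(1-x)\,{}_2F_1(a_1+1,a_2+1;b_1;x)$, the claimed reduction.

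As a routine cross-check that sidesteps the confluence subtlety entirely, one can instead compare series coefficients: writing $\frac{(e+1)_n}{(e)_n}=\frac{e+n}{e}$ expresses the left-hand coefficient of $x^n$ as $\frac{e+n}{e}\,v_n$, with $v_n$ the coefficient of ${}_2F_1(a_1,a_2;b_1;x)$, while expanding $(1-x)\,{}_2F_1(a_1+1,a_2+1;b_1;x)$ and invoking the recurrence for $v_n$ collapses its $x^n$ coefficient to $\frac{a_1a_2+(a_1+a_2-b_1+1)n}{a_1a_2}\,v_n$; equating these two linear-in-$n$ factors for all $n$ forces exactly $e=a_1a_2/(a_1+a_2-b_1+1)$.
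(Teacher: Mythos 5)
Your proof is correct, but it takes a genuinely different route from the paper's. The paper proves this corollary in two lines by specializing machinery already in place: with $e=a_1a_2/(a_1+a_2-b_1+1)$, the M\"obius correspondence in Theorem~\ref{thm:3F2main}(\ref{enum:i}) gives $e'=a_1$, so in the right-hand ${}_3F_2$ the upper parameter $a_1$ cancels against the lower parameter $e'=a_1$, leaving $(1-x)^{-a_1}\,{}_2F_1(a_1+1,b_1-a_2-1;b_1;\tfrac{x}{x-1})$, which Pfaff's transformation~(\ref{eq:pfaff}) converts to $(1-x)\,{}_2F_1(a_1+1,a_2+1;b_1;x)$. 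You instead go back to the Heun picture and observe that this $e$ is precisely the puncture of $\mathbb{P}^1_e$ flagged in the remark after Theorem~\ref{thm:family} at which $a(e)=1$: the apparent singular point collides with $x=1$, the equation collapses to a three-point Fuchsian equation with merged exponents $1,\gamma-\alpha-\beta-1$ at $x=1$ (your indicial computation and your value $q=\alpha\beta+\alpha+\beta-\gamma+1$ are both correct), and the F-homotopy~(\ref{eq:Fhomotopy}) with $\zeta=1$ turns its P-symbol into that of the GHE for ${}_2F_1(\alpha+1,\beta+1;\gamma;x)$, which for $n=3$ pins down the equation since there are no accessory parameters. Your route buys a structural explanation---the three-parameter reduction exists exactly because the apparent singularity is absorbed into $x=1$---and, being independent of Theorem~\ref{thm:3F2main}, it doubles as a consistency check on that theorem; the paper's route is far shorter but hides this mechanism. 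Two small repairs: writing $G=\Hl(1,q;\dots)$ is formally outside the paper's definition of~$\Hl$, which requires $a\neq0,1$, so you should say explicitly why ${}_3F_2$ still satisfies the merged second-order equation---for instance, because the factorization~(\ref{eq:Efac}) holds identically in~$e$ and hence at $a=1$, or by continuity of all coefficients in~$e$; and the confluence worry is milder than you suggest, since substituting $a=1$ directly into~(\ref{eq:HE}) yields a regular singular point at $x=1$ for \emph{every} accessory value (the coefficient of $D$ keeps a simple pole and the zeroth-order coefficient a double pole), the specific $q$ controlling only the indicial roots, not regularity. Your closing coefficient comparison is itself a complete, rigorous standalone proof, so the result stands in any case.
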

\begin{proof}
  If $e=a_1a_2/(a_1+a_2-b_1+1)$ then $e'=a_1,$ reducing the right-hand side
  in Theorem~\ref{thm:3F2main}(\ref{enum:i}) to
  $(1-x)^{-a_1}\,{}_2F_1(a_1+1,b_1-a_2-1;b_1;\frac{x}{x-1})$.  To obtain
  the given right-hand side, use Eq.~(\ref{eq:pfaff}), Pfaff's
  transformation of~${}_2F_1$.
\end{proof}

\begin{mycorollarycorollary}
\label{cor:cor}
  Any very well poised ${}_3F_2$ can be reduced to a well poised~${}_2F_1,$
  according to
  \begin{displaymath}
    {}_3F_2(\alpha,\beta,\alpha/2+1;\,\alpha-\beta+1,\alpha/2;\,x) = (1-x)\,{}_2F_1(\alpha+1,\beta+1;\,\alpha-\beta+1;\,x).
  \end{displaymath}
\end{mycorollarycorollary}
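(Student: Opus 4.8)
The plan is to derive this directly from Corollary~\ref{cor:cor0} by specializing its parameters. First I would reduce a general very well poised ${}_3F_2(a_1,a_2,a_3;b_1,b_2;x)$ to canonical form. By definition there is a permutation of the $\{a_i\}$ and $\{b_i\}$ after which $a_1+1=a_2+b_1=a_3+b_2$ and, by very well poisedness, $b_2=a_1/2$; the last two conditions force $a_3=a_1+1-b_2=a_1/2+1$, and the middle one forces $b_1=a_1-a_2+1$. Writing $\alpha:=a_1$ and $\beta:=a_2$, every very well poised ${}_3F_2$ therefore takes the canonical shape ${}_3F_2(\alpha,\beta,\alpha/2+1;\alpha-\beta+1,\alpha/2;x)$ of the statement, so it suffices to treat this form.

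Next I would match it against the family ${}_3F_2(a_1,a_2,e+1;b_1,e;x)$ of Corollary~\ref{cor:cor0}, reading off $a_1=\alpha$, $a_2=\beta$, $b_1=\alpha-\beta+1$, and $e=\alpha/2$ (so that $e+1=\alpha/2+1$ and the second lower parameter is $e=\alpha/2$, exactly as written). The one substantive point is to confirm that $e=\alpha/2$ is precisely the value singled out by the corollary's constraint $e=a_1a_2/(a_1+a_2-b_1+1)$. This is a one-line computation: the denominator is $a_1+a_2-b_1+1=\alpha+\beta-(\alpha-\beta+1)+1=2\beta$, whence $a_1a_2/(a_1+a_2-b_1+1)=\alpha\beta/(2\beta)=\alpha/2$, in agreement. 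Conceptually this is the whole content of the corollary: the very well poised condition $b_2=a_1/2$ is exactly what pins $e$ to the special value appearing in Corollary~\ref{cor:cor0}. (Here one tacitly assumes $\beta\neq0$, so that $e$ is finite and the reduction is non-degenerate.)

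With the constraint verified, Corollary~\ref{cor:cor0} applies verbatim and delivers the asserted identity ${}_3F_2(\alpha,\beta,\alpha/2+1;\alpha-\beta+1,\alpha/2;x)=(1-x)\,{}_2F_1(\alpha+1,\beta+1;\alpha-\beta+1;x)$. To round off the statement I would verify the remaining poisedness label on the right: the ${}_2F_1(\alpha+1,\beta+1;\alpha-\beta+1;x)$ is well poised because $(\alpha+1)+1=(\beta+1)+(\alpha-\beta+1)$, both sides equaling $\alpha+2$. I expect no real obstacle in this argument; the entire proof collapses to the single denominator computation $a_1+a_2-b_1+1=2\beta$, which is what makes the constraint of Corollary~\ref{cor:cor0} reduce to $e=\alpha/2$.
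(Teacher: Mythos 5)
Your proposal is correct and coincides with the paper's own proof, which is the one-liner ``Set $(a_1,a_2;b_1)=(\alpha,\beta;\,\alpha-\beta+1)$ in Corollary~\ref{cor:cor0}''; your denominator computation $a_1+a_2-b_1+1=2\beta$, forcing $e=\alpha/2$, is exactly the verification implicit in that specialization. The extra checks you supply (that every very well poised ${}_3F_2$ has the canonical shape, and that the resulting ${}_2F_1$ is well poised) are sound routine confirmations of the paper's framing rather than a different route.
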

\begin{proof}
  Set $(a_1,a_2;b_1)=(\alpha,\beta;\alpha-\beta+1)$ in
  Corollary~\ref{cor:cor0}.
\end{proof}
\begin{myremark*}
This reduction of a very well poised ${}_3F_2$ was also found by Bailey.
It is sometimes reproduced in a different but equivalent
form~\cite[(2.4.2.10)]{Slater66}, in which Eq.~(\ref{eq:euler}), Euler's
transformation of~${}_2F_1,$ has been applied to the right side.
\end{myremark*}

\begin{mycorollarycorollary}
\label{cor:cor1}
For any $s,t\in\mathbb{C},$ the parametrized function
\begin{displaymath}
  (1-x)^{2\alpha-1}\,{}_3F_2\Bigl(2\alpha-1,(1-s)\alpha-(1+s)\beta-(\tfrac12-t),e+1;\,
  (1-s)\alpha + (1-s)\beta - (\tfrac12-t), e;\,x\Bigr),
\end{displaymath}
in which 
\begin{displaymath}
  e:=(\alpha-\tfrac12)\left[\frac{(1-s)\alpha-(1+s)\beta-(\frac12-t)}{\alpha-\beta-\frac12}\right],
\end{displaymath}
is stable under the interchange $\alpha\leftrightarrow\beta$.
\end{mycorollarycorollary}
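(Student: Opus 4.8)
The interchange $\alpha\leftrightarrow\beta$ should be realized by the Euler-like transformation of Theorem~\ref{thm:3F2main}(\ref{enum:ii}): among the four members of the order-$4$ transformation group generated by the Pfaff- and Euler-like identities, it is the only nontrivial one that preserves the argument~$x$ (the two Pfaff-type maps send $x\mapsto x/(x-1)$), so it alone can produce an honest $\alpha\leftrightarrow\beta$ symmetry of a function of~$x$. The plan is to show that applying this single transformation to the displayed function reproduces that same function with $\alpha$ and $\beta$ interchanged. Writing $a_1=2\alpha-1$, $a_2=(1-s)\alpha-(1+s)\beta-(\tfrac12-t)$, $b_1=(1-s)\alpha+(1-s)\beta-(\tfrac12-t)$, and letting a superscript ``$\mathrm{swap}$'' denote the effect of $\alpha\leftrightarrow\beta$, a short computation gives $b_1-a_1-1=a_2^{\mathrm{swap}}$ and $b_1-a_2-1=a_1^{\mathrm{swap}}=2\beta-1$, while $b_1$ is manifestly symmetric. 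Since the new upper parameters produced by Theorem~\ref{thm:3F2main}(\ref{enum:ii}) are exactly $b_1-a_1-1$ and $b_1-a_2-1$, with lower parameter still $b_1$, the transformation sends the unordered pair $\{a_1,a_2\}$ to its swap and fixes~$b_1$---precisely the parameter change of the interchange. (These matchings are what force the particular coefficients in $a_2$; the two free parameters $s,t$ sweep out the most general $a_2,b_1$ compatible with $a_1=2\alpha-1$.) The prefactor then looks after itself: the transformation contributes $(1-x)^{b_1-a_1-a_2-1}=(1-x)^{2\beta-2\alpha}$, and $(1-x)^{2\alpha-1}(1-x)^{2\beta-2\alpha}=(1-x)^{2\beta-1}$ is exactly the prefactor of the swapped function.

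The one substantive point, which I expect to be the main obstacle, is the behavior of the auxiliary parameter~$e$. Theorem~\ref{thm:3F2main}(\ref{enum:ii}) carries $e$ to its M\"obius image $e''=\frac{(b_1-a_1-1)(b_1-a_2-1)\,e}{(b_1-a_1-a_2-1)\,e+a_1a_2}$, whereas the swapped function carries the given $e$ to $e^{\mathrm{swap}}$, so everything reduces to the single rational identity $e''=e^{\mathrm{swap}}$. To check it transparently I would write $e=a_1a_2/D$ for a denominator~$D$; using $(b_1-a_1-1)(b_1-a_2-1)=a_1^{\mathrm{swap}}a_2^{\mathrm{swap}}$ and $b_1-a_1-a_2-1=2\beta-2\alpha$ in the formula for~$e''$ and simplifying, the equation $e''=e^{\mathrm{swap}}$ collapses to the condition $D-D^{\mathrm{swap}}=2(\alpha-\beta)$ on the denominator alone. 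Verifying this is the delicate step: it is precisely the constraint that pins down the coefficients in the definition of~$e$, and one confirms it by expanding $D$ as a polynomial in $\alpha,\beta$ (the $s,t$-dependence cancels) and reading off its antisymmetric part.

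Granting $e''=e^{\mathrm{swap}}$, the three checks chain together, since Theorem~\ref{thm:3F2main}(\ref{enum:ii}) then rewrites the function as
\begin{multline*}
(1-x)^{2\alpha-1}\,{}_3F_2(a_1,a_2,e+1;\,b_1,e;\,x)\\
=(1-x)^{2\beta-1}\,{}_3F_2(a_1^{\mathrm{swap}},a_2^{\mathrm{swap}},e^{\mathrm{swap}}+1;\,b_1,e^{\mathrm{swap}};\,x),
\end{multline*}
which is the displayed function with $\alpha$ and $\beta$ interchanged---the asserted invariance. As an independent verification I would re-derive the identity through Corollary~\ref{cor:two2F1s}, writing each side as $e^{-1}[xD+e]\,{}_2F_1$ and applying Euler's transformation to the underlying Gauss function; the first-order operator $[xD+e]$ absorbs the unit shift that Euler's ${}_2F_1$ transformation introduces in the upper parameters, and the two routes must agree, as the remark following Theorem~\ref{thm:3F2main} anticipates.
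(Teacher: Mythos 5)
Your proposal takes a genuinely different route from the paper, and it breaks at precisely the step you deferred. The paper's proof is a two-line reduction: the displayed $e$ is exactly the value $a_1a_2/(a_1+a_2-b_1+1)$ of Corollary~\ref{cor:cor0} --- provided one reads the lower parameter as $(1-s)\alpha+(1-s)\beta+(\tfrac12+t)$, which is what the paper's own proof and the $s=t=0$ specialization (where $b_1=\alpha+\beta+\tfrac12$) use; the printed $-(\tfrac12-t)$ is a misprint --- so the whole function collapses to $(1-x)^{2\alpha}\,{}_2F_1\bigl(2\alpha,(1-s)\alpha-(1+s)\beta+(\tfrac12+t);\,(1-s)\alpha+(1-s)\beta+(\tfrac12+t);\,x\bigr)$, and Euler's transformation~(\ref{eq:euler}) of this single ${}_2F_1$ visibly interchanges $\alpha$ and~$\beta$. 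You instead apply Theorem~\ref{thm:3F2main}(\ref{enum:ii}) at the ${}_3F_2$ level and rest everything on $e''=e^{\mathrm{swap}}$, which you correctly reduce to $D-D^{\mathrm{swap}}=2(\alpha-\beta)$ for $e=a_1a_2/D$; but you never perform that check, and it fails. The stated $e=(\alpha-\tfrac12)\,a_2/(\alpha-\beta-\tfrac12)$ equals $a_1a_2/(2\alpha-2\beta-1)$, so $D=2\alpha-2\beta-1$ and $D-D^{\mathrm{swap}}=4(\alpha-\beta)$, twice what you need; explicitly, $e''=a_1^{\mathrm{swap}}a_2^{\mathrm{swap}}/(D+2\beta-2\alpha)=-a_1^{\mathrm{swap}}a_2^{\mathrm{swap}}\neq e^{\mathrm{swap}}$.

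The failure is structural, not a removable slip in the expansion. Your matchings $b_1-a_2-1=2\beta-1=a_1^{\mathrm{swap}}$ and $b_1-a_1-1=a_2^{\mathrm{swap}}$ hold only for the misprinted $b_1$, under which the asserted invariance is false outright: at $s=t=0$, $\alpha=1$, $\beta=2$ the two sides begin $1-\tfrac{14}{5}x+\cdots$ and $1-2x+\cdots$. With the corrected $b_1$ each matching is off by one, and moreover $D=a_1+a_2-b_1+1=-(b_1-a_1-a_2-1)$, so the denominator $(b_1-a_1-a_2-1)e+a_1a_2$ of the $e''$ map vanishes identically: the corollary's $e$ sits exactly at the pole of the Euler-like M\"obius map, $e''=\infty$, a case Theorem~\ref{thm:3F2main} explicitly excludes ($e,e''$ are taken finite). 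The infinite image signals the degeneration of the pair $(e''+1,e'')$, i.e., the collapse of the right-hand ${}_3F_2$ to a ${}_2F_1$ --- exactly the phenomenon the paper exploits through Corollary~\ref{cor:cor0} rather than through Theorem~\ref{thm:3F2main}(\ref{enum:ii}), so the remark calling the invariance a ``special case'' of the Euler-like transformation cannot be implemented by substituting this $e$ into the $e''$ formula, contrary to your opening inference that Theorem~\ref{thm:3F2main}(\ref{enum:ii}) is the only possible mechanism. (Your scheme does prove a correct invariance for the different choice $e=(2\alpha-1)\,a_2/(\alpha-\beta-\tfrac12)$ with the printed $a_2,b_1$, since there $D=\alpha-\beta-\tfrac12$ and $D-D^{\mathrm{swap}}=2(\alpha-\beta)$; but that is not the stated family.) Your closing fallback via Corollary~\ref{cor:two2F1s} is in essence the paper's argument; promoting it to the main proof, with the lower parameter corrected, is the repair.
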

\begin{proof}
  By Corollary~\ref{cor:cor0}, this function reduces to
  \begin{displaymath}
  (1-x)^{2\alpha}\,{}_2F_1\Bigl(2\alpha,(1-s)\alpha-(1+s)\beta+(\tfrac12+t);\,
  (1-s)\alpha + (1-s)\beta + (\tfrac12+t);\,x\Bigr).
  \end{displaymath}
  Applying Eq.~(\ref{eq:euler}), Euler's transformation of~${}_2F_1,$ has
  the effect of interchanging $\alpha$ and~$\beta$ in this expression.
\end{proof}
\begin{myremark*}
  For each $s,t\in\mathbb{C},$ the invariance under
  $\alpha\leftrightarrow\beta$ is really a special case of
  Theorem~\ref{thm:3F2main}(\ref{enum:ii}), the general Euler-like
  transformation of ${}_3F_2(a_1,a_2,e+1;b_1,e;x)$.
\end{myremark*}

\begin{mycorollarycorollary}
There is an involutory transformation of the two-parameter family of very
well poised ${}_3F_2$'s, namely the $\alpha\leftrightarrow\beta$
interchange
\begin{multline*}
  (1-x)^{2\alpha-1}\,{}_3F_2(2\alpha-1,\alpha-\beta-\tfrac12,\alpha+\tfrac12;\,\alpha+\beta+\tfrac12,\alpha-\tfrac12;\,x)\\
  =
  (1-x)^{2\beta-1}\,{}_3F_2(2\beta-1,\beta-\alpha-\tfrac12,\beta+\tfrac12;\,\beta+\alpha+\tfrac12,\beta-\tfrac12;\,x).
\end{multline*}
\end{mycorollarycorollary}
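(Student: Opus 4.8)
The plan is to reduce each very well poised~${}_3F_2$ to a single Gauss function by means of Corollary~\ref{cor:cor0}, and then to recognize the two resulting~${}_2F_1$'s as Euler transforms of one~another. Write $\Phi(\alpha,\beta;x)$ for the left-hand side; its~${}_3F_2$ has parameters $a_1=2\alpha-1,$ $a_2=\alpha-\beta-\tfrac12,$ $b_1=\alpha+\beta+\tfrac12,$ and $e=\alpha-\tfrac12,$ so that $e+1=\alpha+\tfrac12$. The one place demanding care is the verification that the reduction hypothesis of Corollary~\ref{cor:cor0}, namely $e=a_1a_2/(a_1+a_2-b_1+1),$ is met, and this is exactly where very-well-poisedness enters. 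Well-poisedness gives $a_2+b_1=a_1+1,$ whence $a_1+a_2-b_1+1=2a_2$ and the required value is $a_1a_2/(2a_2)=a_1/2=\alpha-\tfrac12,$ in agreement with~$e$.

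Granting this, applying Corollary~\ref{cor:cor0} collapses the left-hand~${}_3F_2$ to a single Gauss function, whose attendant factor $(1-x)$ combines with the external prefactor $(1-x)^{2\alpha-1}$:
\[
  \Phi(\alpha,\beta;x)=(1-x)^{2\alpha}\,{}_2F_1\bigl(2\alpha,\,\alpha-\beta+\tfrac12;\,\alpha+\beta+\tfrac12;\,x\bigr).
\]
I would then carry out the identical reduction on the right-hand side, which amounts to interchanging $\alpha$ and~$\beta,$ obtaining $\Phi(\beta,\alpha;x)=(1-x)^{2\beta}\,{}_2F_1\bigl(2\beta,\,\beta-\alpha+\tfrac12;\,\alpha+\beta+\tfrac12;\,x\bigr)$. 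In passing one notes that this Gauss function is well poised, since $1+2\alpha=(\alpha-\beta+\tfrac12)+(\alpha+\beta+\tfrac12),$ consistent with the remark following Corollary~\ref{cor:cor}.

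It then remains only to match the two reduced expressions, and this is precisely Euler's transformation~(\ref{eq:euler}). Applied to the Gauss function at hand, it produces the exponent $(\alpha+\beta+\tfrac12)-2\alpha-(\alpha-\beta+\tfrac12)=2(\beta-\alpha)$ and sends the upper parameters $2\alpha,\,\alpha-\beta+\tfrac12$ to their complements $\beta-\alpha+\tfrac12,\,2\beta$ relative to the lower parameter, so that
\[
  (1-x)^{2\alpha}\,{}_2F_1\bigl(2\alpha,\,\alpha-\beta+\tfrac12;\,\alpha+\beta+\tfrac12;\,x\bigr)=(1-x)^{2\beta}\,{}_2F_1\bigl(2\beta,\,\beta-\alpha+\tfrac12;\,\alpha+\beta+\tfrac12;\,x\bigr).
\]
The right-hand member is exactly the reduced form of $\Phi(\beta,\alpha;x)$ found above, so $\Phi(\alpha,\beta;x)=\Phi(\beta,\alpha;x),$ as asserted. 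I do not expect a genuine obstacle: every step is either the already-established reduction of Corollary~\ref{cor:cor0} or Euler's involution~(\ref{eq:euler}), and the sole substantive verification is that $e=a_1/2$ satisfies the reduction condition\,---\,an identity forced by the well poised relation rather than imposed by hand. In fact this result may be viewed as the very well poised instance of the $\alpha\leftrightarrow\beta$ invariance already recorded in Corollary~\ref{cor:cor1}.
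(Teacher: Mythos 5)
Your proof is correct and is essentially the paper's own argument: the paper proves this by setting $s=t=0$ in Corollary~\ref{cor:cor1}, whose proof is exactly your computation\,---\,reduction of each side to a single Gauss function via Corollary~\ref{cor:cor0} followed by Euler's transformation~(\ref{eq:euler})\,---\,and the paper's remark also notes the direct route through the very well poised reduction, as you do. You have merely inlined the specialization $s=t=0$ rather than citing it, with the welcome extra detail that the hypothesis $e=a_1a_2/(a_1+a_2-b_1+1)$ is forced by well-poisedness (assuming $a_2\neq0$, consistent with the nonsingularity hypotheses of Theorem~\ref{thm:3F2main}).
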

\begin{proof}
Set $s=t=0$ in Corollary~\ref{cor:cor1}.
\end{proof}
\begin{myremark*}
  This transformation law was also discovered by Bailey
  (see~\cite[\S4.5]{Erdelyi53}).  It can be derived directly from
  Corollary~\ref{cor:cor}, which reduces both sides to the same well
  poised~${}_2F_1$.  The generalization given in Corollary~\ref{cor:cor1}
  is new, as is its interpretation as a transformation of Euler type,
  a~special case of Theorem~\ref{thm:3F2main}(\ref{enum:ii}).

  It should be noted that the two sides of this transformation formula are
  solutions of different third-order differential equations.  The solution
  spaces of these equations have nonempty intersection, but are distinct.
  This phenomenon is clarified by the respective P-symbols, which
  by~examination are
  \begin{sizedisplaymath}{\small}
    \left\{\!
    \begin{array}{ccc|c}
      0 & 1 & \infty & x \\
      \hline
      0 & 2\alpha & 0 &  \\
      \tfrac12-\alpha-\beta & 2\beta & \tfrac12-\alpha-\beta & \\
      \hline
      \tfrac32-\alpha & 2\alpha-1 & \tfrac32-\alpha & \\
    \end{array}
    \!\right\},
    \quad
    \left\{\!
    \begin{array}{ccc|c}
      0 & 1 & \infty & x \\
      \hline
      0 & 2\alpha & 0 &  \\
      \tfrac12-\alpha-\beta & 2\beta & \tfrac12-\alpha-\beta & \\
      \hline
      \tfrac32-\beta & 2\beta-1 & \tfrac32-\beta & \\
    \end{array}
    \!\right\}.
  \end{sizedisplaymath}
  The first two rows of exponents are the same in both, but the third row
  differs.  This partial agreement is what one expects to see in two
  third-order operators that are distinct left multiples of a common
  second-order operator.
\end{myremark*}


\begin{thebibliography}{10}

\bibitem{Abramov2006}
S.~A. Abramov, M.~A. Barkatou, and M.~van Hoeij.
\newblock Apparent singularities of linear difference equations with polynomial
  coefficients.
\newblock {\em Appl. Algebra Engrg. Comm. Comput.}, 17(2):117--133, 2006.
\newblock Available as arXiv:math/0409508.

\bibitem{Andrews99}
G.~E. Andrews, R.~Askey, and R.~Roy.
\newblock {\em Special Functions}, volume~71 of {\em The Encyclopedia of
  Mathematics and its Applications}.
\newblock Cambridge University Press, Cambridge, UK, 1999.

\bibitem{Askey94}
R.~Askey.
\newblock A look at the {B}ateman project.
\newblock In W.~Abikoff, J.~S. Birman, and K.~Kuiken, editors, {\em The
  Mathematical Legacy of {W}ilhelm {M}agnus: Groups, Geometry, and Special
  Functions}, volume 169 of {\em Contemporary Mathematics}, pages 29--43.
  American Mathematical Society, Providence, RI, 1994.

\bibitem{Baweja81}
K.~K. Baweja.
\newblock Transformations of generalised {K}amp{\'e} de {F\'e}riet functions,
  {II}.
\newblock {\em Ga{\d{n}}ita}, 32(1--2):44--48, 1981.

\bibitem{Becken2000}
W.~Becken and P.~Schmelcher.
\newblock The analytic continuation of the {G}aussian hypergeometric function
  {${}_2F_1(a,b;c;z)$} for arbitrary parameters.
\newblock {\em J.~Comput. Appl. Math.}, 126(1--2):449--478, 2000.

\bibitem{Cluzeau2006}
T.~Cluzeau and M.~van Hoeij.
\newblock Computing hypergeometric solutions of linear recurrence equations.
\newblock {\em Appl. Algebra Engrg. Comm. Comput.}, 17(2):83--115, 2006.

\bibitem{Dwork84}
B.~Dwork.
\newblock On {K}ummer's twenty-four solutions of the hypergeometric
  differential equation.
\newblock {\em Trans. Amer. Math. Soc.}, 285(2):497--521, 1984.

\bibitem{Erdelyi53}
A.~Erd{\'e}lyi, W.~Magnus, F.~Oberhettinger, and F.~G. Tricomi, editors.
\newblock {\em Higher Transcendental Functions}.
\newblock McGraw--Hill, New York, 1953--55.
\newblock Also known as {\em The Bateman Manuscript Project\/}.

\bibitem{Goursat1881}
{\'E}.~Goursat.
\newblock Sur l'{\'e}quation diff{\'e}rentielle lin{\'e}aire qui admet pour
  int{\'e}grale la s{\'e}rie hyperg{\'e}om{\'e}trique.
\newblock {\em Ann. Sci. {\'E}cole Normale Sup.~(2)}, 10:S3--S142, 1881.

\bibitem{Grove85}
L.~C. Grove and C.~T. Benson.
\newblock {\em Finite Reflection Groups}.
\newblock Springer-Verlag, New York/Berlin, 2nd edition, 1985.

\bibitem{Letessier94}
J.~Letessier, G.~Valent, and J.~Wimp.
\newblock Some differential equations satisfied by hypergeometric functions.
\newblock In R.~V.~M. Zahar, editor, {\em Approximation and Computation},
  number 119 in Internat. Ser. Numer. Math., pages 371--381. Birkh{\"a}user,
  Boston/Basel, 1994.

\bibitem{Lievens2005}
S.~Lievens, K.~Srinivasa~Rao, and J.~Van~der Jeugt.
\newblock The finite group of the {K}ummer solutions.
\newblock {\em Integral Transforms Spec. Funct.}, 16(2):153--158, 2005.

\bibitem{Luke75}
Y.~L. Luke.
\newblock {\em Mathematical Functions and Their Approximations}.
\newblock Academic, New York, 1975.

\bibitem{Maier10}
R.~S. Maier.
\newblock The 192 solutions of the {H}eun equation.
\newblock {\em Math. Comp.}, 76(258):811--843, 2007.
\newblock Available as arXiv:math.CA/0408317.

\bibitem{Melenk94}
H.~Melenk and J.~Apel.
\newblock {REDUCE} package {NCPOLY}: Computation in non-commutative polynomial
  ideals.
\newblock Preprint, Konrad--Zuse--Zentrum f{\"u}r Informationstechnik (ZIB),
  Berlin, Germany, 1994.

\bibitem{Petkovsek92}
M.~Petkov{\v s}ek.
\newblock Hypergeometric solutions of linear recurrences with polynomial
  coefficients.
\newblock {\em J.~Symbolic Comput.}, 14(2--3):243--264, 1992.

\bibitem{Rainville45}
E.~D. Rainville.
\newblock The contiguous function relations for {${}_pF_q$} with applications
  to {B}ateman's {$J_n^{u,v}$} and {R}ice's {$H_n(\xi,p,v)$}.
\newblock {\em Bull. Amer. Math. Soc.}, 51:714--723, 1945.

\bibitem{Ronveaux95}
A.~Ronveaux, editor.
\newblock {\em Heun's Differential Equations}.
\newblock Oxford University Press, Oxford, 1995.
\newblock With contributions by F.~M. Arscott, S.~Yu. Slavyanov, D.~Schmidt,
  G.~Wolf, P.~Maroni and A.~Duval.

\bibitem{Slater66}
L.~J. Slater.
\newblock {\em Generalized Hypergeometric Functions}.
\newblock Cambridge University Press, Cambridge, UK, 1966.

\bibitem{Umetsu2000}
H.~Umetsu.
\newblock A conserved energy integral for perturbation equations in the
  {K}err--de~{S}itter geometry.
\newblock {\em Progress Theoret. Physics}, 104(4):743--755, 2000.
\newblock Available as arXiv:gr-qc/0005037.

\bibitem{Valent86}
G.~Valent.
\newblock An integral transform involving {H}eun functions and a related
  eigenvalue problem.
\newblock {\em {SIAM} J.~Math. Anal.}, 17(3):688--703, 1986.

\end{thebibliography}
\end{document}